\newtheorem{thm}{Theorem}[section]
\newtheorem{cor}[thm]{Corollary}
\newtheorem{lmm}[thm]{Lemma}
\newtheorem{prp}[thm]{Proposition}
\theoremstyle{remark}
\newtheorem*{rmk*}{Remark}
\newtheorem*{ack}{Acknowledgement}
\begin{document}
\title[]{Polyharmonic weak Maass forms of\\ higher depth for $\mathrm{SL}_2(\mathbb{Z})$}
\author[]{Toshiki Matsusaka}
\address{Graduate School of Mathematics, Kyushu University, Motooka 744, Nishi-ku Fukuoka 819-0395, Japan}
\email{toshikimatsusaka@gmail.com}

\date{}

\begin{abstract}
The space of polyharmonic Maass forms was introduced by Lagarias-Rhoades, recently. They constructed its basis from the Taylor coefficients of the real analytic Eisenstein series. In this paper, we introduce polyharmonic weak Maass forms, that is, we relax the moderate growth condition at cusp, and we construct a basis as a generalization of Lagarias-Rhoades' works. As a corollary, we can obtain a preimage of an arbitrary polyharmonic weak Maass form under the $\xi$-operator.
\end{abstract}

\subjclass[2010]{Primary 11F37, Secondary 11F12.}
\keywords{Polyharmonic Maass forms; Harmonic; Modular forms}

\maketitle

\section{Introduction}\label{s1} 
We begin by discussing Kronecker's first limit formula. The Riemann zeta function $\zeta(s) := \sum_{n=1}^{\infty} n^{-s}$ has a simple pole at $s=1$, and the following Laurent expansion is known.
\begin{align*}
\zeta(s) = \frac{1}{s-1} + \gamma + O(s-1),
\end{align*}
where $\gamma = 0.577215\dots$ is Euler's constant. As an analogue we now consider the real analytic Eisenstein series given by
\begin{align*}
E(z,s) := \sum_{(m,n) \in \mathbb{Z}^2 \backslash (0,0)}\frac{y^s}{|mz+n|^{2s}}
\end{align*}
for $s \in \mathbb{C}$ with $\mathrm{Re}(s) > 1$ and a modular variable $z = x+iy$ in the upper half plane $\mathfrak{H}$. It is a non-holomorphic modular form of weight 0 on $\mathrm{SL}_2(\mathbb{Z})$, and meromorphically continued to the whole $s$-plane. Then this Eisenstein series has the Laurent expansion of the form
\begin{align*}
E(z,s) = \frac{\pi}{s-1} + 2\pi(\gamma - \mathrm{log}2 - \mathrm{log}(\sqrt{y}|\eta(z)|^2)) + O(s-1)
\end{align*}
where, for $q := e^{2\pi iz}$, $\eta(z) := q^{\frac{1}{24}}\prod_{n=1}^{\infty}(1-q^n)$ is the Dedekind eta function. This is so-called Kronecker's first limit formula. This limit formula has been extensively studied since a long time ago, and a lot of proofs are known, Kroneker's own and Shintani \cite{S} using the Barnes double Gamma function, and many others. Further results on Kronecker's first limit formula are reviewed in \cite{DIT3}. On the other hand, Lagarias-Rhoades \cite{LR} considered the higher Laurent coefficients of $E(z,s)$ from the viewpoint of harmonic Maass forms. In fact, $E(z,s)$ is an eigenfunction of the hyperbolic Laplacian $\Delta_0 := -y^2\bigl(\frac{\partial^2}{\partial x^2} + \frac{\partial^2}{\partial y^2}\bigr)$, and satisfies
\begin{align*}
\Delta_0 E(z,s) = s(1-s)E(z,s).
\end{align*}
Then they showed that the $r$-th Laurent coefficient $F_r(z)$ in the form
\begin{align*}
E(z,s) = \sum_{r=-1}^{\infty} F_r(z) (s-1)^r
\end{align*}
satisfies the differential equation $\Delta_0^{r+2}F_r(z) = 0$. Based on this property, they constructed a new space called polyharmonic Maass forms, and revealed the roles of these Laurent coefficients in this new space.\\

A harmonic Maass form of even weight $k \in 2\mathbb{Z}$ on $\mathrm{SL}_2(\mathbb{Z})$ is a smooth function $f$ on $\mathfrak{H}$ satisfying the following conditions.
\begin{enumerate}
\item For any $\gamma = [\begin{smallmatrix}a & b \\c & d \end{smallmatrix}] \in \mathrm{SL}_2(\mathbb{Z})$,
\begin{align*}
f\biggl(\frac{az+b}{cz+d}\biggr) = (cz+d)^k f(z).
\end{align*}
\item $f$ is annihilated by the weight $k$ hyperbolic Laplacian
\begin{align*}
\Delta_k := -y^2\biggl(\frac{\partial^2}{\partial x^2} + \frac{\partial^2}{\partial y^2}\biggr) +iky\biggl(\frac{\partial}{\partial x} + i\frac{\partial}{\partial y}\biggr).
\end{align*}
\item There exists an $\alpha \in \mathbb{R}$ such that $f(x+iy) = O(y^{\alpha})$ as $y \to \infty$, uniformly in $x \in \mathbb{R}$.
\end{enumerate}
The space\footnote{This definition of harmonic Maass forms adopted in Lagarias-Rhoades \cite{LR} is not the standard one. For example in Bringmann-Diamantis-Raum \cite{BDR}, a harmonic Maass form might have exponentially growing terms at the cusp.} of all such forms is denoted by $H_k$. Relaxing the condition (3) to $f(x+iy) = O(e^{\alpha y})$, we denote by $H_k^!$. In addition, we relax the condition (2) to $\Delta_k^r f = 0$ for $r \in \mathbb{Z}_{\geq 1}$, then we denote by $H_k^r$ and $H_k^{r,!}$, respectively, and we call a function $f \in H_k^r$ $(\text{resp. }H_k^{r,!})$ a polyharmonic (weak) Maass form of weight $k$ and depth $r$ (see \cite{ALR}, \cite{LR}). In particular we see that $H_k^1 = H_k$ and $H_k^{1,!} = H_k^!$. We next consider the real analytic Eisenstein series of weight $k \in 2\mathbb{Z}$ defined by
\begin{align*}
E_k(z,s) := \sum_{(m,n) \in \mathbb{Z}^2 \backslash (0,0)}\frac{y^s}{(mz+n)^k|mz+n|^{2s}},
\end{align*}
and its double completion 
\begin{align*}
\widehat{\widehat{E}}_k(z,s) := \bigl(s+\frac{k}{2}\bigr)\bigl(s+\frac{k}{2}-1\bigr)\pi^{-(s+\frac{k}{2})}\Gamma \bigl(s+\frac{k}{2}+\frac{|k|}{2}\bigr)E_k(z,s).
\end{align*}
(As a remark, this doubly completed Eisenstein series $\widehat{\widehat{E}}_0(z,s)$ coincides $F_z(s)$ in Brown-Omar \cite{BOm}). Then we also have the equation $\Delta_k \widehat{\widehat{E}}_k(z,s) = s(1-k-s)\widehat{\widehat{E}}_k(z,s)$. We define the Taylor coefficients of $\widehat{\widehat{E}}_{k}(z,s)$ by
\begin{align*}
\widehat{\widehat{E}}_{k}(z,s) = \left\{\begin{array}{ll}
\displaystyle{\sum_{r=0}^{\infty}} F_{k,r}(z)(s+k-1)^r \quad &\text{if } k \leq 0, \\
\displaystyle{\sum_{r=0}^{\infty}} G_{k,r}(z)s^r\quad &\text{if } k \geq 2,
\end{array} \right.
\end{align*}
then the following theorem was established.
\begin{thm}\cite{LR}\label{LRthm}
Let $r \geq 1$ be an integer, and $k \in 2\mathbb{Z}$ an even integer. Then
\begin{enumerate}
\item For an even integer $k \leq 0$, $\{F_{k,0}(z), \dots, F_{k,r-1}(z)\}$ is a basis for $H_k^r$.
\item For $k =2$, $\{G_{2,1}(z), \dots, G_{2,r}(z)\}$ is a basis for $H_2^r$.
\item For an even integer $k \geq 4$, it holds that $H_k^r = E_k^r + S_k$, where $E_k^r$ is spanned by $\{G_{k,0}(z), \dots G_{k,r-1}(z)\}$ and $S_k$ consists of cusp forms on $\mathrm{SL}_2(\mathbb{Z})$.
\end{enumerate}
\end{thm}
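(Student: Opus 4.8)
The plan is to prove the three cases simultaneously by induction on the depth $r$, using the $\xi$-operator together with the Laplacian filtration $H_k^1\subseteq H_k^2\subseteq\cdots\subseteq H_k^r$. The input is the eigenvalue equation $\Delta_k\widehat{\widehat{E}}_k(z,s)=s(1-k-s)\widehat{\widehat{E}}_k(z,s)$: substituting the Taylor expansion at the base point ($s=1-k$ if $k\le 0$, $s=0$ if $k\ge 2$) and comparing coefficients yields the triangular recursions $\Delta_k F_{k,j}=(k-1)F_{k,j-1}-F_{k,j-2}$ and $\Delta_k G_{k,j}=(1-k)G_{k,j-1}-G_{k,j-2}$ (negative indices read as $0$). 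Since $k\neq 1$, iterating gives $\Delta_k^{\,j+1}F_{k,j}=0$ with $\Delta_k^{\,j}F_{k,j}$ a nonzero multiple of $F_{k,0}$, and likewise $\Delta_k^{\,j+1}G_{k,j}=0$ with top iterate a nonzero multiple of $G_{k,0}$ when $k\ge 4$; for $k=2$ one first checks $G_{2,0}=0$ — a consequence of the analytic behavior of $E_2(z,s)$ at $s=0$ — so that $G_{2,j}\in H_2^{\,j}$ with $\Delta_2^{\,j-1}G_{2,j}$ a nonzero multiple of $G_{2,1}$. Because $\widehat{\widehat{E}}_k(z,s)$ has only polynomially growing and exponentially decaying Fourier coefficients, all these forms lie in $H_k^r$, and applying $\Delta_k^{\,r-1}$ to a hypothetical linear relation kills everything but the top term, so the proposed spanning sets are linearly independent; for $k\ge 4$ one checks in addition that $G_{k,0}$ is a nonzero multiple of the holomorphic Eisenstein series while the remaining $G_{k,j}$ are genuinely non-holomorphic, so $E_k^r\cap S_k=0$. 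This supplies all the lower bounds.

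For the matching upper bound the crucial fact is that $\Delta_k$ carries $H_k^r$ into $H_k^{r-1}$ with kernel exactly $H_k^{r-1}$: modularity is plainly preserved, and moderate growth is preserved because, solving the relevant ordinary differential equation Fourier-mode by Fourier-mode, a form in $H_k^r$ has zeroth coefficient a combination of $y^a(\log y)^b$ with $a\in\{0,1-k\}$ and nonzero coefficients a polynomial in $y$ times a decaying Whittaker function — a class that $\Delta_k$ stabilizes and does not enlarge. Hence by rank–nullity $\dim H_k^r=\dim H_k^{r-1}+\dim\Delta_k^{\,r-1}(H_k^r)$ with $\Delta_k^{\,r-1}(H_k^r)\subseteq H_k^1$, and the whole problem reduces to classifying $H_k^1=H_k$ and bounding that last image inside it.

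For $H_k$: the $\xi$-operator maps it to holomorphic modular forms of weight $2-k$ with kernel the weakly holomorphic forms of weight $k$; moderate growth rules out poles at the cusp, and — via the period-polynomial obstruction to completing the non-holomorphic Eichler integral of a cusp form — also rules out a cuspidal shadow. So $H_k=M_k$ when $k\ge 4$, giving $\dim H_k=1+\dim S_k$; for $k\le 0$ even $\xi_k$ is injective with one-dimensional (Eisenstein) image, so $\dim H_k=1$, realized by $F_{k,0}$; and for $k=2$, $\xi_2\colon H_2\to M_0=\mathbb{C}$ is injective while $M_2=0$, so $\dim H_2=1$, realized by $G_{2,1}$. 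Since $\Delta_k^{\,r-1}$ already sends $F_{k,r-1}$ (resp.\ $G_{k,r-1}$) to a nonzero multiple of $F_{k,0}$ (resp.\ $G_{k,0}$), the Eisenstein vector always lies in $\Delta_k^{\,r-1}(H_k^r)$; when $k\le 0$ or $k=2$ that vector already spans $H_k^1$ and the induction closes with $\dim H_k^r=r$. When $k\ge 4$ one must still rule out $\phi\in H_k^2$ with $\Delta_k\phi=h$ for a nonzero cusp form $h\in S_k$; but then $\psi:=\xi_k\phi$ has moderate growth and $\Delta_{2-k}\psi=\Delta_{2-k}\xi_k\phi=\xi_k\Delta_k\phi=\xi_k h=0$ (using $\xi_k\Delta_k=\Delta_{2-k}\xi_k$ and that $h$ is holomorphic), so $\psi\in H_{2-k}$, and part (1) in weight $2-k\le -2$ forces $\xi_{2-k}\psi$ to be a multiple of the holomorphic Eisenstein series; since $\xi_{2-k}\psi$ is, up to sign, $\Delta_k\phi=h$, this gives $h=0$. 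Hence $\Delta_k^{\,r-1}(H_k^r)$ is one-dimensional, $\dim H_k^r=r+\dim S_k$, and comparison with the lower bound yields $H_k^r=E_k^r+S_k$.

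The hard part, I expect, will be the two "no exotic forms" inputs — that a harmonic Maass form of moderate growth cannot have a cuspidal shadow, and that $\Delta_k$ preserves moderate growth on $H_k^r$ — both resting on a careful, if ultimately routine, analysis of Fourier expansions and the associated confluent-hypergeometric differential equations. Once these are in hand, the $\xi$-operator bootstrap that reduces the $k\ge 4$ case to the already-settled negative-weight case is short and is the conceptual core of the argument. The alternative, closer to Lagarias–Rhoades, would be to compute the Fourier expansions of the Laurent/Taylor coefficients of $E_k(z,s)$ explicitly and read everything off by hand; the inductive argument above trades that computation for the $\xi$-operator formalism.
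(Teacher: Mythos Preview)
The paper does not prove this theorem: it is quoted from Lagarias--Rhoades \cite{LR} and used as a black box, so there is no in-paper argument to compare against. The relevant comparison is with \cite{LR} itself, and there your approach is genuinely different. Lagarias--Rhoades proceed computationally: they write down the full Fourier expansion of $\widehat{\widehat{E}}_k(z,s)$, differentiate termwise in $s$, and then classify $H_k^r$ by matching an arbitrary element's Fourier--Whittaker expansion against the explicit expansions of the $F_{k,j}$ or $G_{k,j}$. Your route --- induction on $r$ via the map $\Delta_k^{\,r-1}\colon H_k^r\to H_k^1$, together with the $\xi$-bootstrap that feeds the negative-weight case back into the $k\ge 4$ case to kill cuspidal images --- is cleaner and more structural, and (as you note) trades their explicit Fourier computation for the two analytic inputs you flag. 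Both inputs are real work: the ``no cuspidal shadow'' statement is exactly the content that forces $\dim H_k=1$ for $k\le 0$, and in \cite{LR} it is extracted from the same Fourier--Whittaker analysis rather than from a period-polynomial argument, so your sketch there would need to be made precise.

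One slip to fix: you write that ``$\Delta_k$ carries $H_k^r$ into $H_k^{r-1}$ with kernel exactly $H_k^{r-1}$'', which is false as stated (the kernel of $\Delta_k$ on $H_k^r$ is $H_k^1$). The rank--nullity identity you derive next makes clear you mean the iterated map $\Delta_k^{\,r-1}\colon H_k^r\to H_k^1$, whose kernel is indeed $H_k^{r-1}$; just correct the sentence. With that adjusted, your inductive skeleton is sound.
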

\begin{rmk*}
The first few coefficients are given by
\begin{align*}
F_{0,0}(z) &= 1,\\
F_{0,1}(z) &= \gamma +1 - \mathrm{log}(4\pi) - \mathrm{log}(y|\eta(z)|^4),\\
G_{2,0}(z) &= 0,\\
G_{2,1}(z) &= \frac{\pi}{3}-\frac{1}{y}-8\pi \biggl(\sum_{n=1}^{\infty}\sigma_1(n)q^n\biggr) =: \frac{\pi}{3}E_2^*(z),\\
G_{k,0}(z) &= \bigl(\frac{k}{2}-1\bigr)k!\pi^{-\frac{k}{2}}\zeta(k) E_k(z),\quad \text{for}\ k \geq 4,
\end{align*}
where $\sigma_1(n)$ is the sum of divisors of $n$, and $E_k(z)$ is the usual Eisenstein series of weight $k$, whose constant term is equal to $1$.
\end{rmk*}
Moreover we define polyharmonic Maass forms with half-integral depth. Let $\xi_k$ be the Maass type differential operator defined by
\begin{align*}
\xi_k := 2iy^k\overline{\frac{\partial}{\partial \bar{z}}}.
\end{align*}
This operator was introduced by Bruinier-Funke \cite{BF}. The important point is that this operator sends harmonic Maass forms of weight $k$ to holomorphic modular forms of weight $2-k$, and $\Delta_k = -\xi_{2-k} \circ \xi_k$ holds. Then for an integer $r \geq 1$, a polyharmonic Maass form $f(z)$ of depth $r - 1/2$ is characterized by (1) modularity, (3) moderate growth condition at the cusp, and (2)' $\xi_k \circ \Delta_k^{r-1} f(z) = 0$. Lagarias-Rhoades \cite{LR} obtained the recursion formulas among the above Taylor coefficients $F_{k,r}(z)$ and $G_{k,r}(z)$ as follows. 
\begin{align}\label{rec}
\begin{split}
\xi_k F_{k,r}(z) &= G_{2-k,r}(z),\\
\xi_k G_{k,r}(z) &= (k-1)F_{2-k,r-1}(z) + F_{2-k,r-2}(z),\\
\Delta_k F_{k,r}(z) &= (k-1)F_{k,r-1}(z) - F_{k,r-2}(z),\\
\Delta_k G_{k,r}(z) &= (1-k)G_{k,r-1}(z) - G_{k,r-2}(z).
\end{split}
\end{align}
Hence we can refine Theorem \ref{LRthm}.
\begin{thm}\cite{LR}\label{LRthm2}
Let $r \geq 1$ be an integer, and $k \in 2\mathbb{Z}$ an even integer. Then
\begin{enumerate}
\item For an even integer $k \leq -2$, $H_k^{1/2} =\{0\}$ and $\{F_{k,0}(z), \dots, F_{k,r-1}(z)\}$ is a basis for $H_k^r = H_k^{r+1/2}.$
\item For $k=0$, $\{F_{0,0}(z), \dots, F_{0,r-1}(z)\}$ is a basis for $H_0^r = H_0^{r-1/2}.$
\item For $k=2$, $H_2^{1/2} =\{0\}$ and $\{G_{2,1}(z), \dots, G_{2,r}(z)\}$ is a basis for $H_2^r = H_2^{r+1/2}$.
\item For an even integer $k \geq 4$, it holds that $H_k^r = H_k^{r-1/2} = E_k^r + S_k$, where $E_k^r$ is spanned by $\{G_{k,0}(z), \dots G_{k,r-1}(z)\}$ and $S_k$ consists of cusp forms on $\mathrm{SL}_2(\mathbb{Z})$.
\end{enumerate}
\end{thm}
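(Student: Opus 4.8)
The plan is to sandwich each half-integral depth space between two consecutive integral ones and then decide, by a short computation with the recursions \eqref{rec}, which of the two it actually equals; the bases of the integral spaces are already supplied by Theorem \ref{LRthm}. The only structural fact needed is the factorization $\Delta_k = -\xi_{2-k}\circ\xi_k$. From it I obtain, for every integer $r\geq 1$, the inclusions
\begin{align*}
H_k^{r-1}\subseteq H_k^{r-1/2}\subseteq H_k^r.
\end{align*}
Indeed, if $\Delta_k^{r-1}f=0$ then a fortiori $\xi_k\Delta_k^{r-1}f=0$, and if $\xi_k\Delta_k^{r-1}f=0$ then $\Delta_k^r f = -\xi_{2-k}\bigl(\xi_k\Delta_k^{r-1}f\bigr)=0$, while modularity and the growth condition are common to all three spaces. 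Thus everything reduces to describing the linear map $\xi_k\circ\Delta_k^{r-1}$ on the finite-dimensional space $H_k^r$.

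Next I would iterate the third and fourth relations of \eqref{rec} (with the convention $F_{k,-1}=F_{k,-2}=G_{k,-1}=G_{k,-2}=0$); passing to the formal generating series, $\Delta_k^{r-1}$ multiplies $\sum_j F_{k,j}t^j$ by $t^{r-1}\bigl((k-1)-t\bigr)^{r-1}$ and $\sum_j G_{k,j}t^j$ by $t^{r-1}\bigl((1-k)-t\bigr)^{r-1}$. Comparing coefficients yields $\Delta_k^{r-1}F_{k,j}=0$ for $j\leq r-2$ and $\Delta_k^{r-1}F_{k,r-1}=(k-1)^{r-1}F_{k,0}$ when $k\leq 0$, and $\Delta_k^{r-1}G_{k,j}=0$ for $j\leq r-2$ with $\Delta_k^{r-1}G_{k,r-1}=(1-k)^{r-1}G_{k,0}$ when $k\geq 4$; for $k=2$ the vanishing $G_{2,0}=0$ collapses this to $\Delta_2^{r-1}G_{2,r}=(-1)^{r-1}G_{2,1}$ with all lower basis vectors killed. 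Holomorphic cusp forms are annihilated by $\Delta_k$ and by $\xi_k$, so they play no role. Applying $\xi_k$ and the first two relations of \eqref{rec}, I find that for $f\in H_k^r$ the value $\xi_k\Delta_k^{r-1}f$ equals, up to a nonzero constant depending only on $k$ and $r$ (this uses only that $k$ is even, so $k\neq 1$), the top coefficient of $f$ times $G_{2-k,0}$ if $k\leq 0$, times $F_{0,0}=1$ if $k=2$, and times $\xi_k G_{k,0}$ if $k\geq 4$.

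The theorem then splits into four elementary cases. For $k=0$ one has $G_{2-k,0}=G_{2,0}=0$, so $\xi_0\circ\Delta_0^{r-1}$ vanishes on $H_0^r$ and $H_0^{r-1/2}=H_0^r$. For $k\geq 4$ the coefficient $G_{k,0}$ is a nonzero multiple of the holomorphic Eisenstein series $E_k$, hence $\xi_k G_{k,0}=0$ and $H_k^{r-1/2}=H_k^r=E_k^r+S_k$. For $k\leq -2$ one has $2-k\geq 4$, so by the Remark $G_{2-k,0}$ is a nonzero multiple of $E_{2-k}$; therefore $\xi_k\Delta_k^{r-1}f=0$ forces the top coefficient of $f$ to vanish, i.e. $f\in\operatorname{span}\{F_{k,0},\dots,F_{k,r-2}\}=H_k^{r-1}$, so $H_k^{r-1/2}=H_k^{r-1}$; setting $r=1$ gives $H_k^{1/2}=H_k^0=\{0\}$, and replacing $r$ by $r+1$ gives $H_k^{r+1/2}=H_k^r$. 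For $k=2$ the same reasoning with $F_{0,0}=1\neq 0$ in place of $G_{2-k,0}$ gives $H_2^{r-1/2}=H_2^{r-1}$, hence $H_2^{1/2}=\{0\}$ and $H_2^{r+1/2}=H_2^r$. In every case the stated basis is the one already furnished by Theorem \ref{LRthm}.

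I expect the only point requiring care to be the generating-series bookkeeping that isolates the surviving term of $\Delta_k^{r-1}$ on each basis vector and confirms that the accumulated scalars $(k-1)^{r-1}$, $(1-k)^{r-1}$, $(-1)^{r-1}$ never vanish; everything else is a formal consequence of Theorem \ref{LRthm}, the recursions \eqref{rec}, the identity $\Delta_k=-\xi_{2-k}\circ\xi_k$, and the explicit shapes of $G_{2,0}$ and of $G_{k,0}$ for $k\geq 4$ recalled in the Remark. No essential obstacle is anticipated.
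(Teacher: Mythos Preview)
Your proposal is correct and follows precisely the route the paper indicates: Theorem~\ref{LRthm2} is cited from \cite{LR} without a detailed proof, but the sentence ``Hence we can refine Theorem~\ref{LRthm}'' just before it signals that the half-integral refinement is obtained from Theorem~\ref{LRthm} together with the recursions~\eqref{rec}, which is exactly what you do. Your sandwich $H_k^{r-1}\subseteq H_k^{r-1/2}\subseteq H_k^r$ and the computation of $\xi_k\circ\Delta_k^{r-1}$ on each basis vector via~\eqref{rec}, combined with the explicit values $G_{2,0}=0$, $F_{0,0}=1$, and $G_{k,0}\doteq E_k$ for $k\geq 4$ from the Remark, carry out this implicit argument in full.
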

By the above recursion formulas (\ref{rec}), for example we obtain the following ``Maass sequence",
\begin{align*}
\cdots \xrightarrow{\xi_{2-k}} \frac{1}{k-1} G_{k,1}(z) \xrightarrow{\xi_k} F_{2-k,0}(z) \xrightarrow{\xi_{2-k}} G_{k,0}(z) \xrightarrow{\xi_k} 0.
\end{align*}
In other words, we can construct preimages of the Eisenstein series under the $\xi$-operator. On the other hand as for the discriminant function $\Delta(z)$, Ono \cite{O} constructed its preimage $R_{\Delta}(z)$ satisfying $\xi_{-10}R_{\Delta}(z) = \Delta(z)$ up to a constant multiple. However it has an exponentially growing term, that is, $R_{\Delta}(z) \not\in H_{-10}^1$ but $R_{\Delta}(z) \in H_{-10}^{1,!}$. In order to construct a preimage of any polyharmonic Maass forms (of course, including holomorphic cusp forms) under the $\xi$-operator, it is necessary to generalize Theorem \ref{LRthm2} and the formulas (\ref{rec}) for the whole of polyharmonic weak Maass forms. \\

For the purpose of constructing polyharmonic weak Maass forms, we consider the Maass-Poincar\'{e} series. Let $\Gamma = \mathrm{SL}_2(\mathbb{Z})$, and $\Gamma_{\infty}$ the stabilizer of $\infty$ in $\Gamma$. For $k \in 2\mathbb{Z}$ and $m \in \mathbb{Z}_{\neq 0}$, we define the Maass-Poincar\'{e} series by
\begin{align*}
P_{k,m}(z,s) &:= \sum_{\gamma \in \Gamma_{\infty} \backslash \Gamma} \phi_{k,m}(z,s) |_k\gamma\\
&= \sum_{\gamma \in \Gamma_{\infty} \backslash \Gamma}  \Biggl((4\pi y)^{-\frac{k}{2}}M_{\mathrm{sgn}(m)\frac{k}{2},s-\frac{1}{2}}(4\pi |m|y)e^{2\pi imx}\Biggr)\Bigg|_k\gamma
\end{align*}
for $\mathrm{Re}(s) > 1$, where $|_k \gamma$ is the usual slash operator and $M_{\mu,\nu}(y)$ is the M-Whittaker function. This function satisfies $\Delta_k P_{k,m}(z,s) = (s-k/2)(1-k/2-s)P_{k,m}(z,s)$, and it has the following Taylor expansion form
\begin{align*}
P_{k,m}(z,s) = \left\{\begin{array}{ll}
\displaystyle{\sum_{r=0}^{\infty}} F_{k,m,r}(z)\bigl(s+\frac{k}{2}-1\bigr)^r \quad &\text{if } k \leq 0, \\
\displaystyle{\sum_{r=0}^{\infty}} G_{k,m,r}(z)\bigl(s-\frac{k}{2}\bigr)^r \quad &\text{if } k \geq 2.
\end{array} \right.
\end{align*}
If $k=0$ or $2$, this series needs the analytic continuation to $s=1$. Then these Taylor coefficients $F_{k,m,r}(z)$ and $G_{k,m,r}(z)$ are polyharmonic weak Maass forms of weight $k$ and depth $r+1$ in a similar manner, and satisfy the following recursion formulas.
\begin{thm}\label{Main2}
Let $k \in 2\mathbb{Z}$, $m \neq 0$, and $r \geq 0$ be integers. For the Taylor coefficients $F_{k,m,r}(z)$ and $G_{k,m,r}(z)$, they hold that
\begin{align*}
\xi_k F_{k,m,r}(z) &=(4\pi)^{1-k}\bigl\{(1-k)G_{2-k,-m,r}(z)+G_{2-k,-m,r-1}(z)\bigr\},\\
\xi_k G_{k,m,r}(z) &=(4\pi)^{1-k}F_{2-k,-m,r-1}(z),\\
\Delta_k F_{k,m,r}(z) &= (k-1)F_{k,m,r-1}(z) - F_{k,m,r-2}(z) ,\\
\Delta_k G_{k,m,r}(z) &= (1-k)G_{k,m,r-1}(z) - G_{k,m,r-2}(z).
\end{align*}
Here we put $F_{k,m,r}(z) = G_{k,m,r}(z) = 0$ for any $r<0$.
\end{thm}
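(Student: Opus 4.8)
My plan is to derive all four recursions by feeding the defining Taylor expansions into two closed-form operator identities for the Poincar\'e series: the eigenvalue equation $\Delta_k P_{k,m}(z,s)=(s-\tfrac{k}{2})(1-\tfrac{k}{2}-s)P_{k,m}(z,s)$ recorded above, together with the companion $\xi_k$-identity $\xi_k P_{k,m}(z,s)=(4\pi)^{1-k}(s-\tfrac{k}{2})P_{2-k,-m}(z,s)$. Both hold for real $s$ with $\mathrm{Re}(s)>1$, and I extract the coefficient functions as real-axis $s$-derivatives at $s=1-\tfrac{k}{2}$ (for $k\le 0$) or $s=\tfrac{k}{2}$ (for $k\ge 2$); only the two boundary cases $k=0$ and $k=2$, where this point is $s=1$, require the known meromorphic continuation of $P_{k,m}(z,s)$, and one uses there that $\xi_k$ and $\Delta_k$, being fixed differential operators in $z$, commute with continuation in $s$ because it is locally uniform in $z$ together with the $z$-derivatives. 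The convention $F_{k,m,r}=G_{k,m,r}=0$ for $r<0$ is automatic, these being coefficients of negative powers of the expansion variable.

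The crux is the $\xi_k$-identity. Since $\xi_k(f|_k\gamma)=(\xi_k f)|_{2-k}\gamma$ (Bruinier--Funke) and $\xi_k$ passes through the locally uniformly convergent defining sum, it suffices to evaluate $\xi_k$ on the seed $\phi_{k,m}(z,s)=(4\pi y)^{-k/2}M_{\mathrm{sgn}(m)k/2,\,s-1/2}(4\pi|m|y)e^{2\pi imx}$. Writing $\xi_k f=2iy^k\,\overline{\partial_{\bar z}f}$, using that the $M$-Whittaker factor is real for real $s$, and differentiating, one finds that $\xi_k\phi_{k,m}(z,s)$ equals $(4\pi)^{-k/2}y^{k/2}e^{-2\pi imx}$ times the bracket $2\pi m\,M_{\mu,\nu}(x)-\tfrac{k}{2y}M_{\mu,\nu}(x)+4\pi|m|\,M_{\mu,\nu}'(x)$ at $x=4\pi|m|y$, with $\mu=\mathrm{sgn}(m)\tfrac{k}{2}$ and $\nu=s-\tfrac12$. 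Factoring out $4\pi|m|$ and using $\tfrac{k}{2}=\mathrm{sgn}(m)\mu$, this bracket becomes $4\pi|m|\,\mathrm{sgn}(m)\bigl(\mathrm{sgn}(m)\tfrac{d}{dx}+\tfrac12-\tfrac{\mu}{x}\bigr)M_{\mu,\nu}(x)$; applying the contiguous relations $\bigl(\tfrac{d}{dx}+\tfrac12-\tfrac{\mu}{x}\bigr)M_{\mu,\nu}(x)=\tfrac{\nu-\mu+1/2}{x}M_{\mu-1,\nu}(x)$ when $m>0$ and $\bigl(\tfrac{d}{dx}-\tfrac12+\tfrac{\mu}{x}\bigr)M_{\mu,\nu}(x)=\tfrac{\nu+\mu+1/2}{x}M_{\mu+1,\nu}(x)$ when $m<0$ (each following from $M_{\mu,\nu}(x)=e^{-x/2}x^{\nu+1/2}M(\nu-\mu+\tfrac12,2\nu+1,x)$ together with Kummer's $M'(a,b,x)=\tfrac ab M(a+1,b+1,x)$ and $M(a,b,x)-M(a-1,b,x)=\tfrac xb M(a,b+1,x)$), one checks that in both sign cases the new first index is $\mathrm{sgn}(-m)\tfrac{2-k}{2}$ and the numerator is $s-\tfrac{k}{2}$. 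Since $(4\pi y)^{-(2-k)/2}M_{\mathrm{sgn}(-m)(2-k)/2,\,s-1/2}(4\pi|m|y)e^{2\pi i(-m)x}=\phi_{2-k,-m}(z,s)$ by definition, collecting the powers of $4\pi$ and of $y$ gives $\xi_k\phi_{k,m}(z,s)=(4\pi)^{1-k}(s-\tfrac{k}{2})\phi_{2-k,-m}(z,s)$, and summing over $\Gamma_\infty\backslash\Gamma$ yields the stated identity for $P_{k,m}$.

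With the two identities in hand, I substitute the Taylor expansions and equate coefficients, matching the expansion variable on both sides. For $k\le 0$ put $w=s+\tfrac{k}{2}-1$; then $s-\tfrac{k}{2}=w+(1-k)$, $1-\tfrac{k}{2}-s=-w$, and $P_{2-k,-m}$ (weight $2-k\ge 2$) is expanded in $s-\tfrac{2-k}{2}=w$, so comparing $w^r$-coefficients in $\xi_k P_{k,m}=(4\pi)^{1-k}(w+1-k)\sum_r G_{2-k,-m,r}(z)w^r$ and in $\Delta_k P_{k,m}=((k-1)w-w^2)\sum_r F_{k,m,r}(z)w^r$ gives the first and third formulas. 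For $k\ge 2$ put $w=s-\tfrac{k}{2}$; then $s-\tfrac{k}{2}=w$, $1-\tfrac{k}{2}-s=(1-k)-w$, and $P_{2-k,-m}$ (weight $2-k\le 0$) is expanded in $s+\tfrac{2-k}{2}-1=w$, so comparing $w^r$-coefficients in $\xi_k P_{k,m}=(4\pi)^{1-k}w\sum_r F_{2-k,-m,r}(z)w^r$ and in $\Delta_k P_{k,m}=((1-k)w-w^2)\sum_r G_{k,m,r}(z)w^r$ gives the second and fourth formulas; the negative-index terms appearing there match the convention above.

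I expect the Whittaker-function computation in the second paragraph to be the main obstacle: assembling the raw $y$- and $x$-derivatives of the seed, factoring correctly, and selecting the right contiguous relation for each sign of $m$ so that the index shift $\mu\mapsto\mathrm{sgn}(-m)\tfrac{2-k}{2}$ and the scalar $s-\tfrac{k}{2}$ come out uniformly. Once the closed formula $\xi_k P_{k,m}(z,s)=(4\pi)^{1-k}(s-\tfrac{k}{2})P_{2-k,-m}(z,s)$ is secured, everything else is routine bookkeeping of Taylor coefficients, modulo the continuation remarks at $k=0,2$.
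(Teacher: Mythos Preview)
Your proposal is correct and follows essentially the same route as the paper: establish the seed identity $\xi_k\phi_{k,m}(z,s)=(4\pi)^{1-k}(s-\tfrac{k}{2})\phi_{2-k,-m}(z,s)$ (the paper's Lemma~3.1, written there with $\bar s$ in place of $s$ so as to hold for complex $s$), lift it to $P_{k,m}$ via the slash-equivariance of $\xi_k$, then insert the Taylor expansions and match coefficients; your direct derivation of the $\Delta_k$ recursions from the eigenvalue equation is a small addition, as the paper's Section~4 only writes out the $\xi_k$ cases explicitly. The one cosmetic difference is that you work along the real $s$-axis to suppress the conjugate, whereas the paper keeps $\bar s$ throughout and compares powers of $\bar s+\tfrac{k}{2}-1$; both are equivalent since the expansion points $1-\tfrac{k}{2}$ and $\tfrac{k}{2}$ are real.
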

By Theorem \ref{Main2}, for example we obtain the following ``Maass sequences".
\begin{align*}
\cdots \xrightarrow{\xi_{-10}} (4\pi)^{11} G_{12,1,1}(z) \xrightarrow{\xi_{12}} F_{-10,-1,0}(z) \xrightarrow{\xi_{-10}} 11(4\pi)^{11}G_{12,1,0}(z) \xrightarrow{\xi_{12}} 0.\\
\cdots \xrightarrow{\xi_0} 4\pi G_{2,1,2}(z) \xrightarrow{\xi_2} F_{0,-1,1}(z) \xrightarrow{\xi_0} 4\pi G_{2,1,1}(z) \xrightarrow{\xi_2} F_{0,-1,0}(z) \xrightarrow{\xi_0} 0.
\end{align*}
Here we use the fact that $G_{2,m,0}(z)$ with $m>0$ is equal to $0$. Furthermore we have the following theorem as an extension of Theorem \ref{LRthm2}.
\begin{thm}\label{Main1}
Let $r\geq 1$ be an integer. For an even integer $k \in 2\mathbb{Z}$, we define an integer $\ell_k$ by $k = 12\ell_k + k'$ where $k' \in \{0,4,6,8,10,14\}$. For each integer $m \geq -\ell_k$, the unique weakly holomorphic modular form of the form $f_{k,m}(z) = q^{-m} + \sum_{n > \ell_k}a_k(m,n)q^n$ is given. Then $\{ f_{k,m}(z)\ |\ m \geq -\ell_k\}$ is a basis of $H_k^{1/2,!}$. Moreover
\begin{enumerate}
\item For an even integer $k \leq 0$,
\begin{enumerate}
\item $\{ F_{k,m,r-1}(z)\ |\ m > \ell_k\}$ is a basis for $H_k^{r,!}/H_k^{r-1/2,!}$.
\item $\{ \tilde{F}_{k,m,r-1}(z)\ |\ m \leq \ell_k\}$ is a basis for $H_k^{r-1/2,!}/H_k^{r-1,!}$.
\end{enumerate}
\item For an even integer $k \geq 2$,
\begin{enumerate}
\item $\{ \tilde{G}_{k,m,r}(z)\ |\ m > \ell_k\}$ is a basis for $H_k^{r,!}/H_k^{r-1/2,!}$.
\item $\{ G_{k,m,r-1}(z)\ |\ m \leq \ell_k\}$ is a basis for $H_k^{r-1/2,!}/H_k^{r-1,!}$.
\end{enumerate}
\end{enumerate}
Here we put
\begin{align*}
\tilde{F}_{k,m,r-1}(z) &:= |m|^{-\frac{k}{2}}F_{k,m,r-1}(z) + \sum_{\ell_k<n<0}a_k(-m,n)|n|^{-\frac{k}{2}}F_{k,n,r-1}(z),\\
\tilde{G}_{k,m,r}(z) &:= m^{\frac{k}{2}-1}G_{k,m,r}(z) - \sum_{0<n \leq \ell_k}a_k(-n,m)n^{\frac{k}{2}-1}G_{k,n,r}(z).
\end{align*}
In addition, we put $F_{k,0,r}(z) := F_{k,r}(z)$, $G_{k,0,r}(z) := G_{k,r}(z)$, $\tilde{F}_{0,0,r-1}(z) := F_{0,r-1}(z)$, and $\tilde{G}_{2,0,r}(z) := G_{2,r}(z)$.
\end{thm}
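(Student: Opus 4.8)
The plan is to induct on $r$, using the filtration
\[
\{0\} = H_k^{0,!} \subset H_k^{1/2,!} \subset H_k^{1,!} \subset \cdots \subset H_k^{r-1/2,!} \subset H_k^{r,!}
\]
together with the following elementary observation: since $\Delta_k = -\xi_{2-k}\circ\xi_k$ and $\xi_k$ annihilates exactly the weakly holomorphic forms, a form $f \in H_k^{r,!}$ lies in $H_k^{r-1/2,!}$ iff $\xi_k\Delta_k^{r-1}f = 0$ and lies in $H_k^{r-1,!}$ iff $\Delta_k^{r-1}f = 0$. Hence $\Delta_k^{r-1}$ induces an injection $H_k^{r-1/2,!}/H_k^{r-1,!}\hookrightarrow M_k^{!}$ and $\xi_k\Delta_k^{r-1}$ induces an injection $H_k^{r,!}/H_k^{r-1/2,!}\hookrightarrow M_{2-k}^{!}$, so the theorem reduces to (i) identifying these two images with (essentially all of) $M_k^{!}$ and $M_{2-k}^{!}$, and (ii) exhibiting explicit preimages. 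For (ii) I would feed the recursion formulas of Theorem \ref{Main2} back on themselves: iterating them $r-1$ times collapses $\Delta_k^{r-1}$ and $\xi_k\Delta_k^{r-1}$ applied to the depth-$(r-1)$ building blocks $F_{k,m,r-1}$, $G_{k,m,r-1}$ (and to the corrected forms $\tilde F$, $\tilde G$) into nonzero multiples of the depth-one objects $F_{k,m,0}$, $G_{k,m,0}$, i.e.\ into values at the special point of the Maass--Poincar\'e series, whose Fourier expansions are available from the earlier sections.

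The case $r = 1$ is the analytic heart. Here $H_k^{1/2,!} = M_k^{!}$, and that $\{f_{k,m} : m \ge -\ell_k\}$ is a basis is the standard construction of a canonical (``reduced'') basis of weakly holomorphic modular forms on $\mathrm{SL}_2(\mathbb{Z})$, obtained by dividing by powers of $\Delta$ and applying the valence formula; this also pins down the coefficients $a_k(m,n)$. For the next layer, $H_k^{1,!}/H_k^{1/2,!} = H_k^{!}/M_k^{!}$ embeds via $\xi_k$ into $M_{2-k}^{!}$, and one checks that $F_{k,m,0}$ (for $k \le 0$) and $\tilde G_{k,m,1}$ (for $k \ge 2$) are harmonic weak Maass forms whose $\xi_k$-images run through a spanning set of that image, the needed identity being a Zagier-type duality between $a_k(m,n)$ and $a_{2-k}(n,m)$; the combinations $\tilde F$, $\tilde G$ are engineered precisely so that the residual ambiguity by weakly holomorphic forms is absorbed into the $m \le \ell_k$ part. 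The weights $k = 0$ and $k = 2$ must be handled separately: $P_{k,m}(z,s)$ needs analytic continuation to $s = 1$, the relevant Eisenstein-type forms $F_{0,r}$ and $G_{2,r} = \tfrac{\pi}{3}E_2^{*}$ (under the normalizations $F_{k,0,r} := F_{k,r}$, $\tilde G_{2,0,r} := G_{2,r}$, etc.) are only almost holomorphic, and the recursion multipliers above can degenerate there.

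For the inductive step, take $f \in H_k^{r,!}$. Using Theorem \ref{Main2} to express $\xi_k\Delta_k^{r-1}$ of the proposed top-layer basis vectors ($F_{k,m,r-1}$ with $m > \ell_k$, resp.\ $\tilde G_{k,m,r}$ with $m > \ell_k$) in terms of $F_{k,m,0}$ and $G_{k,m,0}$, the $r=1$ case lets us subtract a finite combination so that $\xi_k\Delta_k^{r-1}$ of the difference vanishes; the difference then lies in $H_k^{r-1/2,!}$. Repeating with the middle-layer vectors ($\tilde F_{k,m,r-1}$ with $m \le \ell_k$, resp.\ $G_{k,m,r-1}$ with $m \le \ell_k$) kills $\Delta_k^{r-1}$ of the new difference, landing it in $H_k^{r-1,!}$, where the inductive hypothesis applies. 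Linear independence of the assembled list is verified by reading off principal parts and $y$-power/$\log y$-degree data from the Fourier expansions: $F_{k,m,r-1}$ carries a distinguished $q^{-m}$-term against a polynomial in $\log y$ of degree exactly $r-1$ (recording the depth), the tilded forms are normalized to keep the principal parts triangular in $m$, and no nonzero combination can be annihilated by a power of $\Delta_k$ down to strictly smaller depth without being zero.

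The step I expect to be the main obstacle is proving surjectivity rather than injectivity, i.e.\ that the listed forms genuinely exhaust each quotient: this requires that every weight-$(2-k)$ weakly holomorphic form is realized as $\xi_k$ of a harmonic weak Maass form built from Poincar\'e series, a precise description of the image of $\xi_k$ on $H_k^{!}$ (including its constant-term behavior), a clean accounting of the overlap between the Eisenstein index $m = 0$ and the weakly holomorphic basis (encoded by the split $m > \ell_k$ versus $m \le \ell_k$ and by the subtractions defining $\tilde F$, $\tilde G$), and the separate treatment of $k = 0$ and $k = 2$.
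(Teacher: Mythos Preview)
Your approach is correct and shares the same core mechanism as the paper: both use the injections
\[
\Delta_k^{r-1}\colon H_k^{r-1/2,!}/H_k^{r-1,!}\hookrightarrow M_k^{!},\qquad
\xi_k\Delta_k^{r-1}\colon H_k^{r,!}/H_k^{r-1/2,!}\hookrightarrow M_{2-k}^{!},
\]
and both compute the images of the proposed basis vectors by iterating the recursion of Theorem~\ref{Main2}, reducing everything to the identification of the values $F_{k,m,0}$, $G_{k,m,0}$ with the Duke--Jenkins basis (the paper's Proposition~\ref{basis}).

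The genuine difference is how spanning of each quotient is obtained. You propose induction on $r$: given $f\in H_k^{r,!}$, match $\xi_k\Delta_k^{r-1}f\in M_{2-k}^{!}$ by a combination of the top-layer vectors, subtract, and fall back to depth $r-1$. The paper avoids induction entirely: it reads off from the explicit Fourier--Whittaker expansions (\ref{FFourier}), (\ref{GFourier}) that the coefficient $c^{+}_{m,r}$ of the exponentially growing term $u_{k,m}^{[r],+}$ in $F_{k,m,r}$ (resp.\ $G_{k,m,r}$) is nonzero, so one can strip \emph{all} exponentially growing terms from an arbitrary $f\in H_k^{r,!}$ by subtracting finitely many $F_{k,m,j}$ with $0\le j\le r-1$, landing in the moderate-growth space $H_k^{r}$; Lagarias--Rhoades' Theorem~\ref{LRthm} then supplies the remaining piece $F_{k,0,j}=F_{k,j}$. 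In short, the paper trades your induction for explicit Whittaker asymptotics plus the already-known structure of $H_k^{r}$. Your route is arguably cleaner conceptually; the paper's is more direct and makes the dependence on Lagarias--Rhoades transparent.

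Your flagged ``main obstacle'' (surjectivity, i.e.\ that the images of the $F_{k,m,r-1}$ with $m>\ell_k$ under $\xi_k\Delta_k^{r-1}$ fill out $M_{2-k}^{!}$) is exactly what Proposition~\ref{basis} provides: the $G_{2-k,-m,0}$ with $-m\le \ell_{2-k}=-1-\ell_k$ realize the full Duke--Jenkins basis of $M_{2-k}^{!}$ up to cusp forms, and the cusp forms themselves are spanned by $G_{2-k,n,0}$ with $0<n\le \ell_{2-k}$. So once you have that proposition (which the paper proves via the explicit Fourier expansion of $P_{k,m}(z,s)$ at the special point), your inductive step goes through without further difficulty; linear independence is then immediate from the same injection, exactly as in the paper's argument around (\ref{xidelta}), and you need not appeal separately to principal-part data.
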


\begin{rmk*}
It is known that $G_{12,1,0}(z)$ is equal to the discriminant function $\Delta(z)$ up to a constant multiple, and $F_{0,-1,0}(z) = j(z)-720$, the elliptic modular $j$-function whose constant term is equal to 24. The functions $f_{k,m}(z)$ in Theorem \ref{Main1} are called the Duke-Jenkins basis, (for more details, see Section \ref{s3.5}). 
\end{rmk*}

\begin{rmk*}
In the special case of $r=1$ and $k=2$, a basis for $H_2^{1,!}$ was constructed by Duke-Imamo\={g}lu-T\'{o}th \cite{DIT2}. More recently, for general $k \in \frac{1}{2}\mathbb{Z}$, Jeon-Kang-Kim \cite{JKK2} obtained a basis for $H_k^{1,!}$, and the author \cite{M} generalized their works to $H_k^{r,!}$ for any $k, r \in \frac{1}{2}\mathbb{Z}$.
\end{rmk*}

As a corollary of Theorem \ref{Main2} and Theorem \ref{Main1}, we obtain the following statement.
\begin{cor}
For an even integer $k \in 2\mathbb{Z}$ and any $r \in \frac{1}{2}\mathbb{Z}$, the map
\begin{align*}
\xi_k : H_k^{r,!} \twoheadrightarrow H_{2-k}^{r-1/2,!}
\end{align*}
is surjective.
\end{cor}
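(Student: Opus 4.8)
The plan is to read the corollary off from the explicit bases of Theorem~\ref{Main1} together with the recursion formulas of Theorem~\ref{Main2}; the cases $r\le\tfrac12$ are vacuous, and for half-integral $r\ge\tfrac32$ one uses the integer instances of Theorem~\ref{Main1} to describe every layer of the filtration on $H_k^{\bullet,!}$. First I would settle the structural points. From $\Delta_k=-\xi_{2-k}\circ\xi_k$ one gets the intertwining relation $\Delta_{2-k}\circ\xi_k=\xi_k\circ\Delta_k$, hence $\Delta_{2-k}^{\,\ell-1}\circ\xi_k=\xi_k\circ\Delta_k^{\,\ell-1}$ and $\xi_{2-k}\circ\Delta_{2-k}^{\,\ell-1}\circ\xi_k=-\Delta_k^{\,\ell}$ for every integer $\ell\ge1$; comparing these against the defining conditions of the spaces shows $\xi_k(H_k^{\rho,!})\subseteq H_{2-k}^{\rho-1/2,!}$ for all $\rho\in\tfrac12\mathbb{Z}_{\ge1}$, so $\xi_k$ indeed maps $H_k^{r,!}$ into $H_{2-k}^{r-1/2,!}$ and lowers the depth by $\tfrac12$. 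Moreover $\xi_k f=0$ forces $f$ to be holomorphic on $\mathfrak H$, hence weakly holomorphic by the weak growth condition, so the kernel of $\xi_k$ on $H_k^{r,!}$ is exactly $H_k^{1/2,!}$; it therefore suffices to show that the image of $\xi_k$ contains a basis of $H_{2-k}^{r-1/2,!}$, which will simultaneously exhibit $\xi_k$ as an isomorphism $H_k^{r,!}/H_k^{1/2,!}\xrightarrow{\ \sim\ }H_{2-k}^{r-1/2,!}$.

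The heart of the proof is that $\xi_k$ carries the Theorem~\ref{Main1} basis of $H_k^{r,!}/H_k^{1/2,!}$ to a triangular deformation of the Theorem~\ref{Main1} basis of $H_{2-k}^{r-1/2,!}$. By the identity $\ell_{2-k}=-\ell_k-1$, the substitution $m\mapsto-m$ carries $\{m>\ell_k\}$ bijectively onto $\{m\le\ell_{2-k}\}$ and $\{m\le\ell_k\}$ onto $\{m>\ell_{2-k}\}$, which sets up a bijection between the two bases; for $k\le0$ it pairs $F_{k,m,j}$ $(m>\ell_k)$ with $G_{2-k,-m,j}$ and $\tilde F_{k,m,j}$ $(m\le\ell_k)$ with $\tilde G_{2-k,-m,j}$, and Theorem~\ref{Main2} gives
\[
\xi_k F_{k,m,j}=(4\pi)^{1-k}\bigl((1-k)G_{2-k,-m,j}+G_{2-k,-m,j-1}\bigr),
\]
together with (summing term by term in the definition of $\tilde F_{k,m,j}$) the parallel identity
\[
\xi_k\tilde F_{k,m,j}=(4\pi)^{1-k}\bigl((1-k)\tilde G_{2-k,-m,j}+\tilde G_{2-k,-m,j-1}\bigr);
\]
in each, the leading summand is a nonzero multiple of the matched basis element and the remaining summand has strictly smaller depth. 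Since the basis elements of $H_{2-k}^{r-1/2,!}$ have depths lying in $\{\tfrac12,1,\dots,r-\tfrac12\}$, a short induction on the depth now shows that every one of them lies in the image of $\xi_k$. The case $k\ge2$ is handled symmetrically, starting instead from $\xi_k G_{k,m,j}=(4\pi)^{1-k}F_{2-k,-m,j-1}$ and the analogous identity for $\xi_k\tilde G_{k,m,j}$; here the Taylor index drops by one under $\xi_k$, but the index bookkeeping still matches the bases.

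The one genuine point — and what I expect to be the main obstacle — is the passage from the $F$-identity to the $\tilde F$-identity (and its analogue for $\tilde G$): the assertion that the correction terms built into $\tilde F_{k,m,j}$ reassemble under $\xi_k$ into precisely the correction terms built into $\tilde G_{2-k,-m,j}$. Writing out the two defining sums and equating coefficients, this reduces exactly to the self-duality $a_k(m,n)=-a_{2-k}(n,m)$ of the coefficients of the Duke--Jenkins basis $f_{k,m}$ developed in Section~\ref{s3.5}, and one must also keep careful track of the half-integral depths to be certain the exhibited preimages land in $H_k^{r,!}$ rather than only in $H_k^{r+1/2,!}$. Finally, the index $m=0$, which is not covered by Theorem~\ref{Main2}, has to be inserted by hand via $\xi_k F_{k,j}=G_{2-k,j}$ and the recursions~\eqref{rec}, and the weights $k=0,2$ (where $E_k(z,s)$ requires analytic continuation to $s=1$) need a routine separate check; neither presents any real difficulty.
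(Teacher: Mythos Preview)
Your proposal is correct and follows the approach the paper intends: the paper states the corollary as an immediate consequence of Theorem~\ref{Main2} and Theorem~\ref{Main1} without writing out a separate proof, and you have filled in exactly the triangular/induction argument on depth that these two theorems are designed to support. Your identification of the duality $a_k(m,n)=-a_{2-k}(n,m)$ from Section~\ref{s3.5} as the mechanism behind the identity $\xi_k\tilde F_{k,m,j}=(4\pi)^{1-k}\bigl((1-k)\tilde G_{2-k,-m,j}+\tilde G_{2-k,-m,j-1}\bigr)$ (and its $\tilde G\mapsto\tilde F$ analogue) is precisely the point, and the index bookkeeping via $\ell_{2-k}=-1-\ell_k$ is handled correctly.
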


Historically speaking, Duke-Imamo\={g}lu-T\'{o}th \cite{DIT1} constructed an example of polyharmonic Maass form of depth $3/2$ whose Fourier coefficients encode real quadratic class numbers. After that Bringmann-Diamantis-Raum \cite{BDR} introduced general sesquiharmonic Maass forms (``sesqui" means depth $3/2$) to study the non-critical values of $L$-functions. (Poly-)Harmonic Maass forms have played important roles in the number theory. Bruinier-Ono \cite{BOn}, and more generally Alfes-Griffin-Ono-Rolen \cite{AGOR} established the connection between the central values $L(E_D, 1), L'(E_D,1)$ and the theory of harmonic Maass forms, where $L(E_D,s)$ is the Hasse-Weil $L$-function of the quadratic twist elliptic curve $E_D$. More precisely, they gave the canonical harmonic Maass forms which encode the vanishing or non-vanishing of $L(E_D, 1)$ and $L'(E_D,1)$. Other studies on polyharmonic Maass forms are given by Bruinier-Funke-Imamo\={g}lu \cite{BFI} and Ahlgren-Andersen-Samart \cite{AAS}.\\

Section \ref{s2} consists of basic properties of Whittaker functions and known results on polyharmonic Maass forms. In this section, we give the Fourier-Whittaker expansion of polyharmonic weak Maass forms. In Section \ref{s3}, we review the Maass-Poincar\'{e} series based on Duke-Imamo\={g}lu-T\'{o}th \cite{DIT2}. For the special case of depth $1/2$, that is, weakly holomorphic modular forms, Duke-Jenkins \cite{DJ} constructed a basis for the space of such forms. We recall their results in Section \ref{s3.5}, and reveal the connection between their basis $f_{k,m}(z)$ and our polyharmonic weak Maass forms $F_{k,m,r}(z)$ and $G_{k,m,r}(z)$. After that, in Section \ref{s4} and \ref{s5}, we give proofs of Theorem \ref{Main2} and \ref{Main1}, respectively. Finally in Section \ref{s6}, we show some examples of known polyharmonic weak Maass forms.

\begin{ack}
I would like to express my gratitude to Scott Ahlgren for letting me know polyharmonic Maass forms. Many thanks also to Daeyeol Jeon, Soon-Yi Kang, and Chang Heon Kim for their helpful comments on Theorem \ref{Main1}, and to Kathrin Bringmann and Jonas Kaszian for providing me a detailed history of polyharmonic Maass forms. Furthermore it is a pleasure to thank Shunsuke Yamana for inviting me to  MPIM, and also thank Soon-Yi Kang and Masanobu Kaneko for giving me the precious opportunity to discuss in NIMS. I am very grateful to the referee for careful reading of this paper and fruitful suggestions. This work is supported by Research Fellow (DC) of Japan Society for the Promotion of Science, and JSPS Overseas Challenge Program for Young Researchers.
\end{ack}

\section{Polyharmonic weak Maass forms and Whittaker functions}\label{s2}
In this section, we review the basic definitions and properties of Whittaker functions, and give the Fourier-Whittaker expansion of polyharmonic weak Maass forms.

\subsection{The Whittaker differential equation}\label{s21}
For given parameters $\mu, \nu \in \mathbb{C}$, the Whittaker differential equation \cite[(9.220)]{GR}, \cite[(7.1.1)]{MOS} is defined by
\begin{align}\label{WD}
D_{\mu, \nu}w(z) := \frac{d^2w}{dz^2} + \biggl(-\frac{1}{4} + \frac{\mu}{z} + \frac{1-4\nu^2}{4z^2} \biggr)w =0.
\end{align}
The standard solutions to this differential equation are given the Whittaker functions $M_{\mu,\nu}(z)$ and $W_{\mu,\nu}(z)$. For parameters $\mu, \nu$ with $\mathrm{Re}(\nu \pm \mu+1/2) >0$ and $y>0$, these Whittaker functions have integral representations \cite[(7.4.1), (7.4.2)]{MOS};
\begin{align*}
M_{\mu,\nu}(y) &= y^{\nu+\frac{1}{2}}e^{\frac{y}{2}}\frac{\Gamma(1+2\nu)}{\Gamma(\nu+\mu+\frac{1}{2})\Gamma(\nu-\mu+\frac{1}{2})}\int_0^1 t^{\nu+\mu-\frac{1}{2}}(1-t)^{\nu-\mu-\frac{1}{2}}e^{-yt}dt,\\
W_{\mu,\nu}(y) &= y^{\nu+\frac{1}{2}}e^{\frac{y}{2}}\frac{1}{\Gamma(\nu-\mu+\frac{1}{2})}\int_1^{\infty}t^{\nu+\mu-\frac{1}{2}}(t-1)^{\nu-\mu-\frac{1}{2}}e^{-yt}dt.
\end{align*}
Then from these integral representations, we can obtain their asymptotic behaviors as $y \to \infty$ (see \cite{DIT2}, \cite[(7.6.1)]{MOS});
\begin{align}\label{asym}
M_{\mu,\nu}(y) \sim \frac{\Gamma(1+2\nu)}{\Gamma(\nu-\mu+\frac{1}{2})}y^{-\mu}e^{\frac{y}{2}}\quad \text{and} \quad W_{\mu,\nu}(y) \sim y^{\mu}e^{-\frac{y}{2}}.
\end{align}
However since the Wronskian between $M_{\mu,\nu}(z)$ and $W_{\mu,\nu}(z)$ is given by \cite[(7.1.2)]{MOS}
\begin{align*}
\mathcal{W}(M_{\mu,\nu}(z), W_{\mu,\nu}(z)) = -\frac{\Gamma(2\nu+1)}{\Gamma(\nu-\mu+\frac{1}{2})},
\end{align*}
$M_{\mu,\nu}(z)$ and $W_{\mu,\nu}(z)$ are linearly dependent if the right-hand side vanishes. On the other hand, the function $W_{-\mu,\nu}(z e^{\pi i})$ is also a solution of (\ref{WD}), and the Wronskian is given by \cite[(7.1.2)]{MOS}
\begin{align*}
\mathcal{W}(W_{\mu,\nu}(z), W_{-\mu,\nu}(z e^{\pi i})) = e^{-\pi i \mu}.
\end{align*}
Then we can take $W_{\mu,\nu}(z)$ and $W_{-\mu,\nu}(z e^{\pi i})$ as linearly independent solutions to (\ref{WD}). According to the paper \cite{ALR}, we put $\mathcal{M}^+_{\mu,\nu}(z) := W_{-\mu,\nu}(z e^{\pi i})$. By \cite[(9.233)]{GR}, we see  
\begin{align}\label{MMW}
M_{\mu,\nu}(y) = \frac{\Gamma(1+2\nu)}{\Gamma(\nu-\mu+\frac{1}{2})}e^{\pi i\mu}\mathcal{M}^+_{\mu,\nu}(y) + \frac{\Gamma(1+2\nu)}{\Gamma(\nu+\mu+\frac{1}{2})}e^{-\pi i (\nu-\mu+\frac{1}{2})}W_{\mu,\nu}(y),
\end{align}
for $2\nu \not\in \mathbb{Z}_{<0}$ and $y>0$.

\subsection{The Fourier-Whittaker expansion}\label{s22}
As we mentioned in Section \ref{s1}, a polyharmonic weak Maass form $f$ of weight $k \in 2\mathbb{Z}$ and depth $r \in \mathbb{Z}_{\geq 1}$ is defined by the following conditions,
\begin{enumerate}
\item For any $\gamma = [\begin{smallmatrix}a & b \\c & d \end{smallmatrix}] \in \mathrm{SL}_2(\mathbb{Z})$,
\begin{align*}
f\biggl(\frac{az+b}{cz+d}\biggr) = (cz+d)^k f(z).
\end{align*}
\item $f(z)$ is smooth, and satisfies
\begin{align*}
\Delta_k^r f(z) = 0.
\end{align*}
\item There exists an $\alpha \in \mathbb{R}$ such that $f(x+iy) = O(e^{\alpha y})$ as $y \to \infty$, uniformly in $x \in \mathbb{R}$.
\end{enumerate}

We now explain the Fourier-Whittaker expansion of $f(z) \in H_k^{r,!}$.  The key point is that the operator $\frac{\partial}{\partial s}$ commutes with $\Delta_k$. In order to obtain our goal, we recall the functions $u_{k,n}^{[j],\pm}(y)$ defined by
\begin{align*}
u_{k,n}^{[j],-}(y) &:= y^{-\frac{k}{2}}\frac{\partial^j}{\partial s^j}W_{\mathrm{sgn}(n)\frac{k}{2}, s-\frac{1}{2}}(4\pi |n|y)\bigg|_{s=\frac{k}{2}},\\
u_{k,n}^{[j],+}(y) &:= y^{-\frac{k}{2}}\frac{\partial^j}{\partial s^j}\mathcal{M}^+_{\mathrm{sgn}(n)\frac{k}{2}, s-\frac{1}{2}}(4\pi |n|y)\bigg|_{s=\frac{k}{2}}
\end{align*}
for $n \in \mathbb{Z}_{\neq 0}$, and for $n=0$ we put 
\begin{align*}
u_{k,0}^{[j],-}(y) &:= \frac{\partial^j}{\partial s^j}y^{1-\frac{k}{2}-s}\bigg|_{s=\frac{k}{2}} = (-1)^j(\mathrm{log}\ y)^j y^{1-k},\\
u_{k,0}^{[j],+}(y) &:= \frac{\partial^j}{\partial s^j}y^{s-\frac{k}{2}}\bigg|_{s=\frac{k}{2}} = (\mathrm{log}\ y)^j.
\end{align*}
These functions are the special cases $u_{k,n}^{[j],\pm}(y;0)$ of the functions introduced in \cite[(3.4), (3.5)]{ALR}. As a remark, it is known that \cite[(2.11), (2.12)]{DIT2}, \cite[(7.2.4)]{MOS}
\begin{align*}
u_{k,n}^{[0],-}(y)e^{2\pi inx} &= y^{-\frac{k}{2}}W_{\mathrm{sgn}(n)\frac{k}{2}, \frac{k-1}{2}}(4\pi |n|y)e^{2\pi inx}\\
&= \left\{\begin{array}{ll}
(4\pi n)^{\frac{k}{2}}q^n\quad &\text{if } n > 0, \\
(4\pi |n|)^{\frac{k}{2}}\Gamma(1-k,4\pi |n|y)q^n\quad &\text{if } n<0,
\end{array} \right.\\
u_{k,n}^{[0],+}(y)e^{2\pi inx} &= y^{-\frac{k}{2}}\mathcal{M}^+_{-\frac{k}{2}, \frac{k-1}{2}}(4\pi |n|y)e^{2\pi inx}\\
&= (4\pi n)^{\frac{k}{2}} q^n,\hspace{105pt} \text{if } n<0,
\end{align*}
where $\Gamma(s,y) := \int_y^{\infty} e^{-t}t^{s-1}dt$ is the incomplete Gamma function. By \cite[Corollary A.2]{ALR}, the set
\begin{align*}
\biggl\{u_{k,n}^{[j],-}(y)\ \bigg|\ 0 \leq j \leq r\biggr\} \cup \biggl\{u_{k,n}^{[j],+}(y)\ \bigg|\ 0 \leq j \leq r\biggr\}
\end{align*}
is linearly independent for each integer $r$. Then by the conditions (1) and (2), a polyharmonic weak Maass form has the following Fourier-Whittaker expansion form.
\begin{prp}\label{FWE}\cite[Section 3]{ALR}
Let $f(z) \in H_k^{r,!}$ for $k \in 2\mathbb{Z}$ and $r \in \mathbb{Z}_{\geq 1}$. Then the Fourier-Whittaker expansion of $f(z)$ is given by
\begin{align*}
f(z) = \sum_{n = -\infty}^{\infty}\sum_{j=0}^{r-1} \biggl(c_{n,j}^- u_{k,n}^{[j],-}(y)e^{2\pi i n x} + c_{n,j}^+ u_{k,n}^{[j],+}(y)e^{2\pi i n x} \biggr),
\end{align*}
where $c_{n,j}^{\pm} \in \mathbb{C}$.
\end{prp}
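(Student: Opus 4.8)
The plan is to pass to the Fourier expansion in the $x$-variable and convert condition (2) into an ordinary differential equation in $y$, one for each Fourier index. Since $[\begin{smallmatrix}1&1\\0&1\end{smallmatrix}]\in\mathrm{SL}_2(\mathbb{Z})$, condition (1) gives $f(z+1)=f(z)$, so $f(x+iy)=\sum_{n\in\mathbb{Z}}a_n(y)e^{2\pi inx}$ with $a_n(y)=\int_0^1 f(x+iy)e^{-2\pi inx}\,dx$; as $f$ is smooth, each $a_n$ is smooth on $(0,\infty)$ and one may differentiate under the integral sign. A direct computation shows that $\Delta_k$ acts on the $n$-th Fourier mode as the second-order operator
\[
L_{k,n}:=-y^2\frac{d^2}{dy^2}-ky\frac{d}{dy}+(2\pi n)^2y^2-2\pi kny ,
\]
that is, $\Delta_k\bigl(a(y)e^{2\pi inx}\bigr)=\bigl(L_{k,n}a\bigr)(y)\,e^{2\pi inx}$. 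By uniqueness of Fourier coefficients, $\Delta_k^r f=0$ is therefore equivalent to $L_{k,n}^r a_n=0$ for every $n\in\mathbb{Z}$.

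Next I would determine the solution space of $L_{k,n}^r y=0$. Since the leading coefficient $-y^2$ of $L_{k,n}$ is nonvanishing on $(0,\infty)$, the operator $L_{k,n}^r$ is a linear ODE of order $2r$, so its solution space on $(0,\infty)$ is $2r$-dimensional. For $n\neq 0$, the substitution that turns $L_{k,n}y=\lambda y$ into the Whittaker equation (\ref{WD}) shows that $\phi^-_s(y):=y^{-k/2}W_{\mathrm{sgn}(n)k/2,\,s-1/2}(4\pi|n|y)$ and $\phi^+_s(y):=y^{-k/2}\mathcal{M}^+_{\mathrm{sgn}(n)k/2,\,s-1/2}(4\pi|n|y)$ satisfy $L_{k,n}\phi^\pm_s=\mu(s)\phi^\pm_s$ with $\mu(s)=(s-k/2)(1-k/2-s)$; for $n=0$ the same holds with $\phi^+_s(y)=y^{s-k/2}$ and $\phi^-_s(y)=y^{1-k/2-s}$ by an Euler-equation computation. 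Because $\mu$ has a simple zero at $s=k/2$ (as $\mu'(k/2)=1-k\neq 0$ for even $k$), differentiating the identity $\Delta_k^r\phi^\pm_s=\mu(s)^r\phi^\pm_s$ in $s$ at most $r-1$ times, setting $s=k/2$, and using the Leibniz rule together with $\partial_s^i(\mu^r)|_{s=k/2}=0$ for $0\le i<r$, gives $\Delta_k^r\bigl(\partial_s^j\phi^\pm_s|_{s=k/2}\bigr)=0$ for $0\le j\le r-1$. These $2r$ functions are precisely the $u_{k,n}^{[j],\pm}(y)$ introduced above, and by \cite[Corollary A.2]{ALR} they are linearly independent; hence they form a basis of the $2r$-dimensional solution space of $L_{k,n}^r y=0$.

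Combining the two steps, each Fourier coefficient has the form $a_n(y)=\sum_{j=0}^{r-1}\bigl(c_{n,j}^-u_{k,n}^{[j],-}(y)+c_{n,j}^+u_{k,n}^{[j],+}(y)\bigr)$ with $c_{n,j}^\pm\in\mathbb{C}$, and substituting back into the Fourier expansion of $f$ yields the asserted formula; convergence needs no separate argument, since the series is literally the Fourier expansion of the smooth function $f$, and the growth condition (3) plays no role in the shape of the expansion. The step needing the most care is the differential-equation analysis: verifying that $\Delta_k$ restricts to $L_{k,n}$ on Fourier modes, pinning down the exact order of vanishing of $\mu$ at $s=k/2$ so that the $r$-fold $s$-derivatives remain in the kernel of $\Delta_k^r$, and confirming via \cite[Corollary A.2]{ALR} that the $u_{k,n}^{[j],\pm}$ genuinely exhaust a $2r$-dimensional space rather than lying in a proper subspace — this last point being exactly why one must use $\mathcal{M}^+_{\mu,\nu}$ in place of $M_{\mu,\nu}$, which becomes linearly dependent with $W_{\mu,\nu}$ at the relevant half-integral values of $\nu$.
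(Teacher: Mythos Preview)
Your proof is correct and follows precisely the approach the paper sets up: the paper does not give a self-contained proof of this proposition but cites \cite[Section 3]{ALR}, and its surrounding exposition (the commutativity of $\partial_s$ with $\Delta_k$, the definition of the $u_{k,n}^{[j],\pm}$, and the linear independence from \cite[Corollary A.2]{ALR}) is exactly the toolkit you invoke. Your argument fleshes out the standard details --- reducing to the order-$2r$ ODE $L_{k,n}^r a_n=0$ on each Fourier mode and exhibiting the $2r$ functions $u_{k,n}^{[j],\pm}$ as a basis of its solution space --- in the expected way.
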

Furthermore by Corollary A.3 in \cite{ALR}, for $n \neq 0$, $u_{k,n}^{[j],-}(y)$ decays exponentially as $y \to \infty$, while $u_{k,n}^{[j],+}(y)$ grows exponentially as $y \to \infty$. Combining with the condition (3), we see that the Fourier-Whittaker coefficients $c_{n,j}^+ = 0$ for almost all indices $(n,j)$. If all coefficients $c_{n,j}^+ =0$ for $n \neq 0$, then $f \in H_k^r$.

\subsection{The action of $\xi$-operator}
First, we recall some basic properties of the Maass type differential operator $\xi_k$. For a $C^{\infty}$-function $f: \mathfrak{H} \to \mathbb{C}$, any $\gamma = [\begin{smallmatrix}a & b \\c & d \end{smallmatrix}] \in \mathrm{SL}_2(\mathbb{Z})$ and integer $k$, it can be easily checked that
\begin{align}\label{xi}
\xi_k\biggl((cz+d)^{-k}f(\gamma z)\biggr) = (cz+d)^{k-2}(\xi_k f)(\gamma z).
\end{align}
Thus we obtain the most important property, if $f(z)$ is a modular form of weight $k$, then $\xi_kf$ is of weight $2-k$. Moreover, $\xi_kf = 0$ if and only if $f$ is holomorphic. Since a harmonic Maass form is characterized by $\Delta_k f = -\xi_{2-k}\circ \xi_k f = 0$, we see that $\xi_k$ maps $H_k^{1,!}$ to $H_{2-k}^{1/2,!} = M_{2-k}^!$ with kernel $M_k^!$. Here we denote by $M_k^!$ the space of weakly holomorphic modular forms of weight $k$.\\

For general $H_k^{r,!}$ terms, we show the following Lemma.
\begin{lmm}\label{xiop}
Under the above notations, we have
\begin{align*}
&\quad\xi_k(u_{k,n}^{[j],-}(y)e^{2\pi i nx})\\
&\quad\quad=\left\{\begin{array}{ll}
j(1-k)u_{2-k,-n}^{[j-1],-}(y)e^{-2\pi i nx} - j(j-1)u_{2-k,-n}^{[j-2],-}(y)e^{-2\pi inx} \quad \text{if } n > 0, \\
-u_{2-k,-n}^{[j],-}(y)e^{-2\pi inx} \quad \text{if } n < 0,
\end{array} \right.\\
&\quad\xi_k(u_{k,n}^{[j],+}(y)e^{2\pi i nx})\\
&\quad\quad = \left\{\begin{array}{ll}
-u_{2-k,-n}^{[j],+}(y)e^{-2\pi inx} \quad \text{if } n > 0,\\
j(1-k)u_{2-k,-n}^{[j-1],+}(y)e^{-2\pi i nx} - j(j-1)u_{2-k,-n}^{[j-2],+}(y)e^{-2\pi inx} \quad \text{if } n < 0,
\end{array} \right.\\
&\quad\xi_k(u_{k,0}^{[j],-}(y)) = (-1)^j \biggl( j u_{2-k,0}^{[j-1],+}(y) + (1-k) u_{2-k,0}^{[j],+}(y) \biggr),\\
&\quad\xi_k(u_{k,0}^{[j],+}(y)) = (-1)^{j-1} j u_{2-k,0}^{[j-1],-}(y),
\end{align*}
where we put $u_{k,n}^{[j],\pm}(y) = 0$ for any $j<0$.
\end{lmm}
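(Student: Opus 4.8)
The plan is to apply $\xi_k$ directly to each Fourier--Whittaker building block, reducing everything to the elementary action of $\xi_k$ on functions of the shape $g(y)e^{2\pi inx}$ together with the classical derivative and contiguity relations for Whittaker functions. First I would record the elementary computation: using $\partial_{\bar z} = \tfrac12(\partial_x + i\partial_y)$ and the definition of $\xi_k$, for $g$ a function of $y$ alone one obtains
\begin{equation*}
\xi_k\bigl(g(y)e^{2\pi inx}\bigr) = y^{k}\bigl(2\pi n\,\overline{g(y)} + \overline{g}\,'(y)\bigr)e^{-2\pi inx}, \qquad \xi_k\bigl(g(y)\bigr) = y^{k}\overline{g}\,'(y),
\end{equation*}
where $\overline g$ denotes $y \mapsto \overline{g(y)}$; note that $\xi_k$ is only $\mathbb{R}$-linear.

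The case $n=0$ is then immediate: here $u_{k,0}^{[j],-}(y) = (-1)^j(\log y)^j y^{1-k}$ and $u_{k,0}^{[j],+}(y) = (\log y)^j$ are explicit elementary functions, and substituting them into the second formula above and differentiating gives $\xi_k(u_{k,0}^{[j],-}) = (-1)^j\bigl(j(\log y)^{j-1} + (1-k)(\log y)^j\bigr)$ and $\xi_k(u_{k,0}^{[j],+}) = jy^{k-1}(\log y)^{j-1}$, which are exactly $(-1)^j\bigl(j\,u_{2-k,0}^{[j-1],+} + (1-k)\,u_{2-k,0}^{[j],+}\bigr)$ and $(-1)^{j-1}j\,u_{2-k,0}^{[j-1],-}$, since $u_{2-k,0}^{[j],+}(y) = (\log y)^j$ and $u_{2-k,0}^{[j],-}(y) = (-1)^j(\log y)^j y^{k-1}$.

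For $n\neq 0$ I would use that $\xi_k$ commutes with $\partial/\partial s$ along the real $s$-line, so that it suffices to compute $\xi_k$ on the $s$-parametrised blocks $y^{-k/2}W_{\mathrm{sgn}(n)k/2,\,s-1/2}(4\pi|n|y)e^{2\pi inx}$ and $y^{-k/2}\mathcal{M}^+_{\mathrm{sgn}(n)k/2,\,s-1/2}(4\pi|n|y)e^{2\pi inx}$ and then apply $\partial_s^j|_{s=k/2}$. Applying the formula above and substituting $t = 4\pi|n|y$ (so $2\pi ny = \mathrm{sgn}(n)t/2$ and $4\pi|n|y = t$) turns this into the application of the first-order operator $z\frac{d}{dz} + \tfrac{\mathrm{sgn}(n)}{2}z - \tfrac k2$ to a Whittaker $W$-function, of parameter $\kappa = \mathrm{sgn}(n)\tfrac k2$ in the $W$-case and -- after unwinding $\mathcal{M}^+_{\mu,\nu}(z) = W_{-\mu,\nu}(ze^{\pi i})$ and the conjugation inside $\xi_k$ -- of parameter $\kappa = -\mathrm{sgn}(n)\tfrac k2$ in the $\mathcal{M}^+$-case. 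The two standard recurrences
\begin{align*}
\Bigl(z\frac{d}{dz} - \frac z2 + \kappa\Bigr)W_{\kappa,\nu}(z) &= -\,W_{\kappa+1,\nu}(z),\\
\Bigl(z\frac{d}{dz} + \frac z2 - \kappa\Bigr)W_{\kappa,\nu}(z) &= \Bigl(\tfrac12+\nu-\kappa\Bigr)\Bigl(\tfrac12-\nu-\kappa\Bigr)W_{\kappa-1,\nu}(z)
\end{align*}
show that in each case exactly one of them is in force according to $\mathrm{sgn}(n)$ and the superscript, its multiplier being either the constant $-1$ (for the $W$-block with $n<0$ and the $\mathcal{M}^+$-block with $n>0$, which produce holomorphic output) or, for $\kappa = \tfrac k2$ and $\nu = s-\tfrac12$, the polynomial $\lambda_k(s) := \bigl(s-\tfrac k2\bigr)\bigl(1-\tfrac k2-s\bigr)$ -- the common $\Delta_k$-eigenvalue of these blocks. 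Repackaging into weight-$(2-k)$ Whittaker data gives
\begin{equation*}
\xi_k\bigl(y^{-k/2}W_{\mathrm{sgn}(n)k/2,\,s-1/2}(4\pi|n|y)e^{2\pi inx}\bigr) = \varepsilon\cdot y^{-(2-k)/2}W_{\mathrm{sgn}(-n)(2-k)/2,\,s-1/2}(4\pi|n|y)e^{-2\pi inx}
\end{equation*}
with $\varepsilon = -1$ or $\lambda_k(s)$ as above, and the analogue with $\mathcal{M}^+$ in place of $W$ (whose verification is the delicate point below). Finally one applies $\partial_s^j|_{s=k/2}$: when $\varepsilon=-1$ one reads off $-\,u_{2-k,-n}^{[j],\pm}e^{-2\pi inx}$, while when $\varepsilon=\lambda_k(s)$ one writes $\lambda_k(s) = (1-k)(s-\tfrac k2) - (s-\tfrac k2)^2$ and uses Leibniz's rule: only the contributions from $\partial_s\lambda_k|_{s=k/2} = 1-k$ and $\tfrac12\partial_s^2\lambda_k|_{s=k/2} = -1$ survive, yielding $j(1-k)\,u_{2-k,-n}^{[j-1],\pm}e^{-2\pi inx} - j(j-1)\,u_{2-k,-n}^{[j-2],\pm}e^{-2\pi inx}$ (with $u_{2-k,-n}^{[j],\pm}=0$ for $j<0$).

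The main obstacle is the $\mathcal{M}^+$-case of the last step. Since $\mathcal{M}^+_{\mu,\nu}(y) = W_{-\mu,\nu}(ye^{\pi i})$ is evaluated on the branch cut of $W$ and is genuinely complex-valued for generic $s$, the complex conjugation in $\xi_k$ cannot be ignored: it replaces $\mathcal{M}^+_{\mu,\nu}$ by $W_{-\mu,\nu}(ye^{-\pi i})$, on the other side of the cut. To see that the result nonetheless reassembles into genuine $u_{2-k,-n}^{[j],+}$-functions -- and with multiplier exactly $-1$ or $\lambda_k(s)$, not merely proportional to them -- one must invoke the monodromy/connection formulas for $W$ (equivalently the identity (\ref{MMW}) expressing $\mathcal{M}^+$ through the real Whittaker functions $M$ and $W$), the evenness $W_{\kappa,\nu} = W_{\kappa,-\nu}$, and the $s \leftrightarrow 1-s$ symmetry it induces on the blocks. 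Carrying out this bookkeeping, and keeping every sign in the two Whittaker recurrences consistent throughout, is where essentially all the work lies; the $W$-blocks and the $n=0$ case are routine by comparison.
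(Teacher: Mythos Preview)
Your approach is essentially the paper's: commute $\xi_k$ with $\partial/\partial s$, apply the Whittaker contiguity relations (the paper uses them in the equivalent packaged form $\frac{d}{dz}\bigl(z^{\pm\mu}e^{\mp z/2}W_{\mu,\nu}(z)\bigr)=\ldots$ and the analogous identities for $\mathcal{M}^+$), then extract the $u^{[j]}$-terms by Leibniz on the eigenvalue factor; the $n=0$ case is done by hand in both. The paper is in fact terser than you on the $\mathcal{M}^+$-case --- it simply quotes the $\mathcal{M}^+$ contiguity formulas and says ``we can calculate them'' without discussing the branch/conjugation issue you raise --- so your caution there is well placed but goes beyond what the paper itself spells out.
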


\begin{proof}
By the commutativity $\xi_k \frac{\partial}{\partial s} = \frac{\partial}{\partial \bar{s}} \xi_k$, we have
\begin{align*}
\xi_k(u_{k,n}^{[j],-}(y)e^{2\pi i nx}) &= \xi_k \Biggl(y^{-\frac{k}{2}}\frac{\partial^j}{\partial s^j}W_{\mathrm{sgn}(n)\frac{k}{2}, s-\frac{1}{2}}(4\pi |n|y)\bigg|_{s=\frac{k}{2}} e^{2\pi inx} \Biggr)\\
&= \frac{\partial^j}{\partial \bar{s}^j} \xi_k \Biggl(y^{-\frac{k}{2}}e^{2\pi ny}W_{\mathrm{sgn}(n)\frac{k}{2}, s-\frac{1}{2}}(4\pi |n|y) e^{2\pi inx - 2\pi ny} \Biggr)\bigg|_{\bar{s}=\frac{k}{2}}.
\end{align*}
For $n >0$, 
\begin{align*}
&\xi_k \Biggl(y^{-\frac{k}{2}}e^{2\pi ny}W_{\frac{k}{2}, s-\frac{1}{2}}(4\pi ny) e^{2\pi inx-2\pi ny} \Biggr)\\
&\quad= iy^k\overline{\biggl(\frac{\partial}{\partial x} + i \frac{\partial}{\partial y}\biggr)}\Biggl(y^{-\frac{k}{2}}e^{2\pi ny}W_{\frac{k}{2}, s-\frac{1}{2}}(4\pi ny) e^{2\pi i nz} \Biggr)\\
&\quad= (4\pi n)^{\frac{k}{2}}y^k \overline{\frac{\partial}{\partial y}\biggl((4\pi ny)^{-\frac{k}{2}}e^{2\pi ny}W_{\frac{k}{2},s-\frac{1}{2}}(4\pi ny)\biggr)}\cdot e^{-2\pi in\bar{z}}.
\end{align*}
By the formula \cite[(7.2.1)]{MOS},
\begin{align*}
\frac{d}{dz} \biggl(z^{-\mu}e^{\frac{1}{2}z}W_{\mu,\nu}(z)\biggr) = \bigl(\frac{1}{2}+\nu-\mu\bigr)\bigl(\frac{1}{2}-\nu-\mu\bigr)z^{-\mu-1}e^{\frac{1}{2}z}W_{\mu-1,\nu}(z),
\end{align*}
it holds that
\begin{align*}
&\frac{\partial}{\partial y}\biggl((4\pi ny)^{-\frac{k}{2}}e^{2\pi ny}W_{\frac{k}{2},s-\frac{1}{2}}(4\pi ny)\biggr)\\ 
&\quad= \frac{1}{y}\bigl(s-\frac{k}{2}\bigr)\bigl(1-\frac{k}{2}-s\bigr) e^{2\pi ny}(4\pi ny)^{-\frac{k}{2}}W_{\frac{k}{2}-1,s-\frac{1}{2}}(4\pi ny),
\end{align*}
that is, we obtain
\begin{align*}
\xi_k \Biggl(y^{-\frac{k}{2}}W_{\frac{k}{2}, s-\frac{1}{2}}(4\pi ny) e^{2\pi inx} \Biggr) &= y^{\frac{k}{2}-1}\bigl(\bar{s}-\frac{k}{2}\bigr)\bigl(1-\frac{k}{2}-\bar{s}\bigr) W_{\frac{k}{2}-1,\bar{s}-\frac{1}{2}}(4\pi ny) e^{-2\pi inx}.
\end{align*}
For the case of $n <0$,
\begin{align*}
\xi_k \Biggl(y^{-\frac{k}{2}}W_{-\frac{k}{2}, s-\frac{1}{2}}(-4\pi ny) e^{2\pi inx} \Biggr) &= y^k \overline{\frac{\partial}{\partial y}\biggl(y^{-\frac{k}{2}}e^{2\pi ny}W_{-\frac{k}{2},s-\frac{1}{2}}(-4\pi ny)\biggr)} \cdot e^{-2\pi in\bar{z}}.
\end{align*}
By using \cite[(7.2.1)]{MOS},
\begin{align*}
\frac{d}{dz} \biggl(z^{\mu}e^{-\frac{1}{2}z}W_{\mu,\nu}(z)\biggr) = -z^{\mu-1}e^{-\frac{1}{2}z}W_{\mu+1,\nu}(z),
\end{align*}
similarly we have
\begin{align*}
&\xi_k(u_{k,n}^{[j],-}(y)e^{2\pi i nx})\\ 
&\quad= \left\{\begin{array}{ll}
\dfrac{\partial^j}{\partial \bar{s}^j}\biggl(\bigl(\bar{s}-\dfrac{k}{2}\bigr)\bigl(1-\dfrac{k}{2}-\bar{s}\bigr)y^{-\frac{2-k}{2}}W_{-\frac{2-k}{2},\bar{s}-\frac{1}{2}}(4\pi ny)e^{-2\pi inx}\biggr) \bigg|_{\bar{s}=\frac{k}{2}}\quad &\text{if } n > 0, \\
\dfrac{\partial^j}{\partial \bar{s}^j}\biggl(-y^{-\frac{2-k}{2}}W_{\frac{2-k}{2},\bar{s}-\frac{1}{2}}(4\pi |n|y)e^{-2\pi inx}\biggr) \bigg|_{\bar{s}=\frac{k}{2}}\quad &\text{if } n < 0.
\end{array} \right.
\end{align*}
If $n > 0$, then
\begin{align*}
\xi_k(u_{k,n}^{[j],-}(y)e^{2\pi i nx}) &= \Biggl(j \bigl(1-\frac{k}{2}-\bar{s}\bigr)\frac{\partial^{j-1}}{\partial \bar{s}^{j-1}}\biggl(y^{-\frac{2-k}{2}}W_{-\frac{2-k}{2},\bar{s}-\frac{1}{2}}(4\pi ny)e^{-2\pi inx}\biggr)\\
&\hspace{40pt}- j(j-1)\frac{\partial^{j-2}}{\partial \bar{s}^{j-2}}\biggl(y^{-\frac{2-k}{2}}W_{-\frac{2-k}{2},\bar{s}-\frac{1}{2}}(4\pi ny)e^{-2\pi inx}\biggr) \Biggr)\bigg|_{\bar{s}=\frac{k}{2}}\\
&= j(1-k)u_{2-k,-n}^{[j-1],-}(y)e^{-2\pi i nx} - j(j-1)u_{2-k,-n}^{[j-2],-}(y)e^{-2\pi inx}.
\end{align*}
If $n < 0$, we have
\begin{align*}
\xi_k(u_{k,n}^{[j],-}(y)e^{2\pi i nx}) &= -u_{2-k,-n}^{[j],-}(y)e^{-2\pi inx}.
\end{align*}
For the case of $u_{k,n}^{[j],+}(y)$, we can calculate them by using
\begin{align*}
\frac{d}{dz}\biggl(z^{-\mu}e^{\frac{1}{2}z}\mathcal{M}_{\mu,\nu}^+(z)\biggr) &= -z^{-\mu-1}e^{\frac{1}{2}z}\mathcal{M}_{\mu-1,\nu}^+(z),\\
\frac{d}{dz}\biggl(z^{\mu}e^{-\frac{1}{2}z}\mathcal{M}_{\mu,\nu}^+(z)\biggr) &= \bigl(\frac{1}{2}+\nu+\mu\bigr)\bigl(\frac{1}{2}-\nu+\mu \bigr)z^{\mu-1}e^{-\frac{1}{2}z}\mathcal{M}_{\mu+1,\nu}^+(z).
\end{align*}
For the remaining cases, we can get them by direct calculations.
\end{proof} 

Combining Proposition \ref{FWE} and Lemma \ref{xiop}, we obtain
\begin{prp}\label{xilift}
The $\xi_k$ operator sends polyharmonic weak Maass forms to polyharmonic weak Maass forms, that is,
\begin{align*}
\xi_k(H_k^{r,!}) \subset H_{2-k}^{r-1/2,!}\quad \text{and}\quad \Delta_k(H_k^{r,!}) \subset H_k^{r-1,!}.
\end{align*}
In particular for a function $f \in H_k^{r,!}$ with an integer $r$, if $c_{n,r-1}^- = 0$ for all $n \leq 0$ and $c_{n,r-1}^+ =0$ for all $n>0$ in Proposition $\ref{FWE}$, then it strictly holds
\begin{align}\label{halfdep}
f \in H_k^{r-1/2,!}
\end{align}
and the space of polyharmonic weak Maass forms of depth $r-1/2$ consists of such forms.
\end{prp}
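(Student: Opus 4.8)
The plan is to reduce everything to the Fourier--Whittaker expansion of Proposition~\ref{FWE} and push it through the operators $\xi_k$ and $\Delta_k$ term by term, using Lemma~\ref{xiop} together with one auxiliary observation about the action of $\Delta_k$ on the building blocks $u_{k,n}^{[j],\pm}$. For the inclusions $\xi_k(H_k^{r,!})\subset H_{2-k}^{r-1/2,!}$ and $\Delta_k(H_k^{r,!})\subset H_k^{r-1,!}$: condition (1) for $\xi_k f$ is immediate from the transformation rule (\ref{xi}), and for $\Delta_k f$ from the standard fact that $\Delta_k$ commutes with the weight-$k$ slash action, so $\xi_k f$ has weight $2-k$ and $\Delta_k f$ weight $k$. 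For the Laplace-type conditions I would record the operator identity
\[
\xi_{2-k}\circ\Delta_{2-k}^{\,r-1}\circ\xi_k=-\,\Delta_k^{\,r},
\]
obtained by iterating $\Delta_k=-\xi_{2-k}\circ\xi_k$ (equivalently $\Delta_{2-k}=-\xi_k\circ\xi_{2-k}$): applied to $f\in H_k^{r,!}$ it gives $\xi_{2-k}\Delta_{2-k}^{r-1}(\xi_k f)=-\Delta_k^r f=0$, which is precisely condition (2)$'$ for depth $r-\tfrac12$, while $\Delta_k^{r-1}(\Delta_k f)=\Delta_k^r f=0$ gives the depth drop for $\Delta_k$.

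For the growth condition (3), Proposition~\ref{FWE} tells us the coefficients $c_{n,j}^{+}$ of $f$ vanish for all but finitely many $(n,j)$, and the corresponding $+$-terms with $n\neq0$ are the only exponentially growing modes. Applying Lemma~\ref{xiop} termwise, the exponentially growing modes of $\xi_k f$ are the $u_{2-k,m}^{[j],+}$ with $m<0$, and by the lemma these arise only from the finitely many $c_{n,j}^{+}$ with $n>0$; hence $\xi_k f$ has at most exponential growth, and so does $\Delta_k f=-\xi_{2-k}(\xi_k f)$. This completes the two inclusions. (The termwise application of $\xi_k$ and $\Delta_k$ is legitimate because the Fourier--Whittaker series converges locally uniformly and $f$ is smooth.)

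For the characterization of depth $r-\tfrac12$, I need the observation that $\Delta_k$ acts on every building block uniformly: writing $\lambda(s):=(s-\tfrac k2)(1-\tfrac k2-s)=(1-k)(s-\tfrac k2)-(s-\tfrac k2)^2$, each of $W_{\mathrm{sgn}(n)k/2,s-1/2}(4\pi|n|y)e^{2\pi inx}$, $\mathcal M^+_{\mathrm{sgn}(n)k/2,s-1/2}(4\pi|n|y)e^{2\pi inx}$ (for $n\neq0$) and $y^{s-k/2}$, $y^{1-k/2-s}$ (for $n=0$) is a $\Delta_k$-eigenfunction of eigenvalue $\lambda(s)$, so differentiating $j$ times in $s$ and specializing $s=\tfrac k2$ gives
\[
\Delta_k\big(u_{k,n}^{[j],\pm}(y)e^{2\pi inx}\big)=j(1-k)\,u_{k,n}^{[j-1],\pm}(y)e^{2\pi inx}-j(j-1)\,u_{k,n}^{[j-2],\pm}(y)e^{2\pi inx}.
\]
Iterating, $\Delta_k^{r-1}$ annihilates every term with $j\le r-2$ and sends the top layer $u_{k,n}^{[r-1],\pm}$ to $(r-1)!\,(1-k)^{r-1}u_{k,n}^{[0],\pm}$ (the only surviving path uses the first term at each step, and $(r-1)!(1-k)^{r-1}\neq0$ since $k$ is even). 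Hence
\[
\Delta_k^{r-1}f=(r-1)!\,(1-k)^{r-1}\sum_{n}\Big(c_{n,r-1}^{-}u_{k,n}^{[0],-}(y)e^{2\pi inx}+c_{n,r-1}^{+}u_{k,n}^{[0],+}(y)e^{2\pi inx}\Big).
\]
Applying $\xi_k$ with $j=0$ in Lemma~\ref{xiop}: $\xi_k(u_{k,n}^{[0],-}e^{2\pi inx})$ vanishes for $n>0$, equals $-u_{2-k,-n}^{[0],-}e^{-2\pi inx}$ for $n<0$, and equals $(1-k)u_{2-k,0}^{[0],+}$ for $n=0$; while $\xi_k(u_{k,n}^{[0],+}e^{2\pi inx})$ vanishes for $n\le0$ and equals $-u_{2-k,-n}^{[0],+}e^{-2\pi inx}$ for $n>0$. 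So $\xi_k\Delta_k^{r-1}f$ is $(r-1)!(1-k)^{r-1}$ times a Fourier--Whittaker sum whose coefficients are, up to signs and the nonzero factor $1-k$, exactly the $c_{n,r-1}^{-}$ with $n\le0$ together with the $c_{n,r-1}^{+}$ with $n>0$. By the linear independence of the functions $u_{2-k,m}^{[0],\pm}(y)e^{2\pi imx}$ (\cite[Corollary A.2]{ALR}), $\xi_k\Delta_k^{r-1}f=0$ is equivalent to $c_{n,r-1}^{-}=0$ for all $n\le0$ and $c_{n,r-1}^{+}=0$ for all $n>0$. Since $f\in H_k^{r-1/2,!}$ means precisely $\xi_k\Delta_k^{r-1}f=0$ (and then $\Delta_k^r f=-\xi_{2-k}\xi_k\Delta_k^{r-1}f=0$, so indeed $f\in H_k^{r,!}$), this proves both (\ref{halfdep}) and the assertion that the depth-$(r-\tfrac12)$ space is exactly this subspace.

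\textbf{Expected main obstacle.} The conceptual content is modest; the work is bookkeeping — verifying the uniform $\Delta_k$-action across the $n=0$ versus $n\neq0$ cases and both $\pm$-types, and then tracking carefully which Fourier--Whittaker coefficients survive the composite $\xi_k\circ\Delta_k^{r-1}$ so that the surviving set matches the hypothesis exactly. The only genuine analytic point, routine here, is justifying termwise application of the operators to a series that may be infinite in $n$.
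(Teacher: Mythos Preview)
Your proof is correct and follows essentially the same route as the paper, which simply says ``Combining Proposition~\ref{FWE} and Lemma~\ref{xiop}, we obtain'' the proposition and then adds a remark justifying termwise differentiation. You have supplied the details the paper omits --- the explicit $\Delta_k$-action on the $u_{k,n}^{[j],\pm}$ (derived cleanly via the eigenvalue $\lambda(s)$ rather than by composing the two $\xi$'s from Lemma~\ref{xiop}), the computation of $\Delta_k^{r-1}$ on the top layer, and the identification of exactly which coefficients survive $\xi_k\circ\Delta_k^{r-1}$ --- but the architecture is the same.
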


\begin{rmk*}
For the proof of this proposition, the $\xi$-operator is applied to the infinite Fourier-Whittaker expansion of Proposition \ref{FWE}. This is guaranteed as follows. We consider the Fourier expansion of the form $f(z) = \sum_{n \in \mathbb{Z}}a(n,y)e^{2\pi inx}$. Since a polyharmonic weak Maass form $f(z)$ is smooth, by the general theory of Fourier expansion, its termwise derivatives in $x$ is valid. As for its termwise derivatives in $y$, by the definition of the Fourier coefficients, we have
\begin{align*}
\frac{\partial}{\partial y}a(n,y) = \int_0^1 \frac{\partial f(x+iy)}{\partial y}e^{-2\pi inx}dx,
\end{align*}
that is, $\frac{\partial}{\partial y}a(n,y)$ is the $n$-th Fourier coefficient of $\frac{\partial}{\partial y}f(x+iy)$.
\end{rmk*}

\section{Maass-Poincar\'{e} series}\label{s3}
In this section, we consider the Maass-Poincar\'{e} series as a generalization of Niebur's Poincar\'{e} series \cite{N}, and compute its Taylor expansion with respect to $s$. Let $\Gamma = \mathrm{SL}_2(\mathbb{Z})$. For an even integer $k$ and nonzero integer $m$, let
\begin{align*}
\phi_{k,m}(z,s) := (4\pi y)^{-\frac{k}{2}} M_{\mathrm{sgn}(m)\frac{k}{2},s-\frac{1}{2}}(4\pi |m|y)e^{2\pi imx}
\end{align*}
and define the corresponding Poincar\'{e} series
\begin{align*}
P_{k,m}(z,s) := \sum_{\gamma \in \Gamma_{\infty} \backslash \Gamma} \phi_{k,m}(z,s) |_k\gamma.
\end{align*}
This series converges for $\mathrm{Re}(s) > 1$. We call $P_{k,m}(z,s)$ the Maass-Poincar\'{e} series of weight $k$ and index $m$.

\begin{lmm}\label{xiphi}
For a nonzero integer $m$, we have
\begin{align*}
\xi_k \phi_{k,m}(z,s) = (4\pi)^{1-k}\bigl(\bar{s}-\frac{k}{2}\bigr)\phi_{2-k,-m}(z,\bar{s}).
\end{align*}
\end{lmm}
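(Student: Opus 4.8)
The strategy is to mimic the computation in the proof of Lemma \ref{xiop}, using the Whittaker $M$-function in place of $W_{\mu,\nu}$ and $\mathcal{M}^+_{\mu,\nu}$. Write $\xi_k = iy^k\overline{(\partial_x + i\partial_y)}$. The basic observation is that for a smooth function $g$ of $y$ alone,
\begin{align*}
(\partial_x + i\partial_y)\bigl(g(y)e^{2\pi imz}\bigr) = ig'(y)e^{2\pi imz},
\end{align*}
since the $x$-derivative of $e^{2\pi imz}$ is cancelled by the holomorphic part of its $y$-derivative. Consequently $\xi_k\bigl(g(y)e^{2\pi imz}\bigr) = y^k\,\overline{g'(y)}\,e^{-2\pi im\bar z}$, so the whole statement reduces to differentiating a single Whittaker function in one real variable.

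Next I would factor $\phi_{k,m}$ so as to expose $e^{2\pi imz}$. Using $e^{2\pi imx} = e^{2\pi imz}e^{2\pi my}$, put
\begin{align*}
\phi_{k,m}(z,s) = g(y)e^{2\pi imz}, \qquad g(y) := (4\pi y)^{-\frac{k}{2}}e^{2\pi my}M_{\mathrm{sgn}(m)\frac{k}{2},\,s-\frac{1}{2}}(4\pi|m|y),
\end{align*}
and substitute $w = 4\pi|m|y$. Then $g(y) = |m|^{k/2}w^{-k/2}e^{w/2}M_{k/2,\,s-1/2}(w)$ when $m>0$, and $g(y) = |m|^{k/2}w^{-k/2}e^{-w/2}M_{-k/2,\,s-1/2}(w)$ when $m<0$. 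In the two cases the relevant input is one of the $M$-Whittaker recurrences (the companions of the $W$-formulas cited from \cite[(7.2.1)]{MOS} in the proof of Lemma \ref{xiop}),
\begin{align*}
\frac{d}{dw}\Bigl(w^{-\mu}e^{\frac{w}{2}}M_{\mu,\nu}(w)\Bigr) &= \bigl(\nu-\mu+\tfrac{1}{2}\bigr)\,w^{-\mu-1}e^{\frac{w}{2}}M_{\mu-1,\nu}(w),\\
\frac{d}{dw}\Bigl(w^{\mu}e^{-\frac{w}{2}}M_{\mu,\nu}(w)\Bigr) &= \bigl(\nu+\mu+\tfrac{1}{2}\bigr)\,w^{\mu-1}e^{-\frac{w}{2}}M_{\mu+1,\nu}(w),
\end{align*}
applied with $\mu = \pm k/2$ and $\nu = s-1/2$; both follow from the contiguous relation $M(a+1,b,w)-M(a,b,w) = \tfrac{w}{b}M(a+1,b+1,w)$ for Kummer's function.

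From here the proof is bookkeeping. Differentiating $g$ by the chain rule, the scalar factor produced is $\nu-\mu+\tfrac12 = s-\tfrac{k}{2}$ in the case $m>0$ and $\nu+\mu+\tfrac12 = s-\tfrac{k}{2}$ in the case $m<0$ — the same quantity in both cases — the accumulated powers of $4\pi$ and $|m|$ collapse to $(4\pi)^{-k/2}$, the Whittaker index shifts to $\mathrm{sgn}(-m)\frac{2-k}{2}$, the $y$-power becomes $y^{-k/2-1}$, and the exponential factor remains $e^{2\pi my}$. Taking the conjugate (using $\overline{M_{\mu,s-1/2}(x)} = M_{\mu,\bar s-1/2}(x)$ for $x>0$ and $\mu$ real), then restoring $y^k$ and $e^{-2\pi im\bar z} = e^{-2\pi imx}e^{-2\pi my}$, the factor $e^{2\pi my}$ cancels against $e^{-2\pi my}$ and one is left with
\begin{align*}
\xi_k\phi_{k,m}(z,s) = (4\pi)^{1-k}\bigl(\bar s-\tfrac{k}{2}\bigr)(4\pi y)^{-\frac{2-k}{2}}M_{\mathrm{sgn}(-m)\frac{2-k}{2},\,\bar s-\frac{1}{2}}(4\pi|m|y)e^{-2\pi imx},
\end{align*}
which is exactly $(4\pi)^{1-k}\bigl(\bar s-\tfrac{k}{2}\bigr)\phi_{2-k,-m}(z,\bar s)$.

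The main point to get right — and essentially the only non-routine part — is the precise shape of the $M$-Whittaker recurrence: one must verify that the scalar factor there is exactly $\nu-\mu+\tfrac12$ (equivalently $s-\tfrac k2$ for $\mu=\tfrac k2$, $\nu=s-\tfrac12$), rather than a product of two factors as in the $W$-case, since it is this single factor that becomes the coefficient $\bar s-\tfrac k2$ after conjugation. The remaining manipulations — the case split on the sign of $m$, the index shifts $\pm\tfrac k2 \mapsto \mathrm{sgn}(-m)\frac{2-k}{2}$, the collecting of powers of $4\pi$, and the cancellation of the exponential factors — are the same routine steps already carried out for $W_{\mu,\nu}$ in the proof of Lemma \ref{xiop}.
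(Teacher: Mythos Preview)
Your proposal is correct and follows essentially the same approach as the paper: both reduce the computation to $\xi_k\bigl(g(y)e^{2\pi imz}\bigr) = y^k\,\overline{g'(y)}\,e^{-2\pi im\bar z}$ and then apply the same pair of $M$-Whittaker derivative identities (with $\mu=\pm k/2$, $\nu=s-\tfrac12$) from \cite[(7.2.1)]{MOS} to produce the factor $s-\tfrac{k}{2}$ and the index shift. The only cosmetic difference is that the paper simply cites the two recurrences, while you also indicate how they arise from a Kummer contiguous relation.
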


\begin{proof}
By \cite[(7.2.1)]{MOS}, they hold that
\begin{align*}
\frac{d}{dz} \biggl(z^{-\mu}e^{\frac{z}{2}}M_{\mu,\nu}(z)\biggr) &= \bigl(\frac{1}{2} + \nu-\mu\bigr)z^{-\mu-1}e^{\frac{z}{2}}M_{\mu-1,\nu}(z),\\
\frac{d}{dz} \biggl(z^{\mu}e^{-\frac{z}{2}}M_{\mu,\nu}(z)\biggr) &= \bigl(\frac{1}{2} + \nu+\mu\bigr)z^{\mu-1}e^{-\frac{z}{2}}M_{\mu+1,\nu}(z).
\end{align*}
Applying these formulas to $\xi_k\phi_{k,m}(z,s)$, then we have
\begin{align*}
\xi_k\phi_{k,m}(z,s) &= iy^k\overline{\biggl(\frac{\partial}{\partial x} + i \frac{\partial}{\partial y}\biggr)}|m|^{\frac{k}{2}}(4\pi |m|y)^{-\frac{k}{2}}e^{2\pi my}M_{\mathrm{sgn}(m)\frac{k}{2}, s-\frac{1}{2}}(4\pi |m| y)e^{2\pi i mz}\\
&= y^k  |m|^{\frac{k}{2}} \overline{\frac{\partial}{\partial y}\biggl((4\pi|m| y)^{-\frac{k}{2}}e^{2\pi my}M_{\mathrm{sgn}(m)\frac{k}{2}, s-\frac{1}{2}}(4\pi |m| y)\biggr)} \cdot e^{-2\pi i m\bar{z}}\\
&= (4\pi)^{1-k} \bigl(\bar{s}-\frac{k}{2}\bigr) (4\pi y)^{\frac{k}{2}-1}M_{\mathrm{sgn}(-m)\frac{2-k}{2}, \bar{s}-\frac{1}{2}}(4\pi |m| y)e^{-2\pi i m x}\\
&= (4\pi)^{1-k}\bigl(\bar{s}-\frac{k}{2}\bigr)\phi_{2-k,-m}(z,\bar{s}).
\end{align*}
\end{proof}

From (\ref{xi}) and Lemma \ref{xiphi}, for a nonzero integer $m$, we see that
\begin{align}\label{xiP}
\xi_k P_{k,m}(z,s) = (4\pi)^{1-k}\bigl(\overline{s}-\frac{k}{2}\bigr)P_{2-k,-m}(z,\overline{s}),
\end{align}
and
\begin{align}\label{DelP}
\Delta_k P_{k,m}(z,s) = -\xi_{2-k} \circ \xi_k P_{k,m}(z,s) = \bigl(s-\frac{k}{2}\bigr)\bigl(1-\frac{k}{2}-s\bigr) P_{k,m}(z,s).
\end{align}
Thus $P_{k,m}(z,s)$ is harmonic at $s= k/2, 1-k/2$. Substituting $s=k/2$ if $k>2$, or $s = 1-k/2$ if $k < 0$, we immediately obtain harmonic forms. In the cases of $k=0$ and $2$, Duke-Imamo\={g}lu-T\'{o}th \cite{DIT2} gave the analytic continuation for $P_{k,m}(z,s)$ to $s=1$. Thus we can include the cases of $k=0, 2$.

\begin{rmk*}
For the real analytic Eisenstein series $E_k(z,s)$, they hold that
\begin{align*}
\xi_kE_k(z,s) &= \bar{s}E_{2-k}(z,\bar{s}+k-1),\\
\Delta_kE_k(z,s) &= s(1-k-s)E_k(z,s).
\end{align*}
\end{rmk*}

By using Kohnen's approach \cite{Kohnen}, we can compute the Fourier expansion of $P_{k,m}(z,s)$ expressed in terms of the Kloosterman sums and Bessel functions. Further details are explained in \cite[Section 3]{JKK3}.
\begin{prp}\label{Fourier} $($This form is found in \cite[Proposition 2]{DIT2}$)$.
For an even integer $k$ and a nonzero integer $m$, the Maass-Poincar\'{e} series $P_{k,m}(z,s)$ has the following Fourier expansion form.
\begin{align*}
P_{k,m}(z,s) = &(4\pi y)^{-\frac{k}{2}}M_{\mathrm{sgn}(m)\frac{k}{2}, s-\frac{1}{2}}(4\pi |m| y)e^{2\pi i m x} + g_{k,m,0}^{}(s)L_{m,0}(s)(-4\pi y)^{-\frac{k}{2}}y^{1-s}\\
&+ \sum_{n \neq 0}g_{k,m,n}^{}(s)L_{m,n}(s)(-4\pi y)^{-\frac{k}{2}}W_{\mathrm{sgn}(n)\frac{k}{2}, s-\frac{1}{2}}(4\pi|n|y)e^{2\pi i n x}
\end{align*}
where
\begin{align*}
g_{k,m,n}^{}(s) := \Gamma(2s)\times \left\{\begin{array}{ll}
\dfrac{2\pi \sqrt{|m/n|}}{\Gamma(s+\mathrm{sgn}(n)k/2)}\quad &\text{if } n \neq 0, \\
\dfrac{4\pi^{1+s}|m|^{s}}{(2s-1)\Gamma(s+k/2)\Gamma(s-k/2)}\quad &\text{if } n=0,
\end{array} \right.
\end{align*}
and
\begin{align*}
L_{m,n}(s) := \left\{\begin{array}{lll}
\displaystyle{\sum_{c=1}^{\infty}}\dfrac{K(m,n,c)}{c}J_{2s-1}\bigl(\dfrac{4\pi \sqrt{|mn|}}{c}\bigr)\quad &\text{if } nm > 0, \\
\displaystyle{\sum_{c=1}^{\infty}}\dfrac{K(m,0,c)}{c^{2s}}\quad &\text{if } n=0,\\
\displaystyle{\sum_{c=1}^{\infty}}\dfrac{K(m,n,c)}{c}I_{2s-1}\bigl(\dfrac{4\pi \sqrt{|mn|}}{c}\bigr)\quad &\text{if } nm < 0,
\end{array} \right.
\end{align*}
where $I_s(x)$ and $J_s(x)$ are two types of Bessel functions, and $K(m,n,c)$ is the Kloosterman sum
\begin{align*}
K(m,n,c) := \sum_{\substack{d (c)^*\\ad \equiv 1 (c)}} e^{2\pi i \frac{ma + nd}{c}}.
\end{align*}
\end{prp}

\section{Duke-Jenkins basis}\label{s3.5}
Duke-Jenkins \cite{DJ} constructed a basis for the space $H_k^{1/2,!} = M_k^!$ of weakly holomorphic modular forms. For an even integer $k \in 2\mathbb{Z}$, we define an integer $\ell_k$ by $k=12\ell_k + k'$ where $k' \in \{0, 4, 6, 8, 10, 14\}$. For each integer $m \geq -\ell_k$, there exists the unique weakly holomorphic modular form $f_{k,m}(z)$ with the Fourier expansion of the form
\begin{align*}
f_{k,m}(z) = q^{-m} + \sum_{n > \ell_k} a_k(m,n)q^n.
\end{align*}
Then they showed the set $\{ f_{k,m}(z)\ |\ m \geq -\ell_k\}$ is a basis of $M_k^!$. In the case of $k=0$, the  first few of the basis are given by
\begin{align*}
f_{0,0}(z) &= 1,\\
f_{0,1}(z) &= j(z) - 744 = q^{-1} + 196884q + 21493760q^2 + \cdots,\\
f_{0,2}(z) &= j(z)^2-1488j(z) + 159768 = q^{-2} + 42987520q + 40491909396q^2 + \cdots.
\end{align*}
For a general weight $k$, we put $f_k(z) = \Delta(z)^{\ell_k} E_{k'}(z)$, where $E_0(z) := 1$. Then the functions $f_{k,m}(z)$ are characterized by the following generating function
\begin{align*}
\sum_{m \geq -\ell_k} f_{k,m}(\tau)q^m = \frac{f_k(\tau)f_{2-k}(z)}{j(z)-j(\tau)},
\end{align*}
(see \cite[Theorem 2]{DJ}). Hence it follows that
\begin{align}\label{dual}
a_k(m,n) = -a_{2-k}(n,m).
\end{align}
We describe these basis functions $f_{k,m}(z)$ in terms of our functions $F_{k,m,r}(z)$ and $G_{k,m,r}(z)$. Combining the Fourier expansion form in Proposition \ref{Fourier} and (\ref{MMW}), we have 
\begin{align*}
P_{k,m}(z,s) &= (-4\pi y)^{-\frac{k}{2}}\Biggl(\frac{\Gamma(2s)}{\Gamma(s+\mathrm{sgn}(m)k/2)}e^{-s\pi i}W_{\mathrm{sgn}(m)\frac{k}{2},s-\frac{1}{2}}(4\pi |m|y) \\
&\hspace{100pt}+ \frac{\Gamma(2s)}{\Gamma(s-\mathrm{sgn}(m)k/2)}\mathcal{M}^+_{\mathrm{sgn}(m)\frac{k}{2},s-\frac{1}{2}}(4\pi |m|y)\Biggr)e^{2\pi i m x} \\
&\quad+ g_{k,m,0}^{}(s)L_{m,0}(s)(-4\pi y)^{-\frac{k}{2}}y^{1-s}\\
&\quad+ \sum_{n \neq 0}g_{k,m,n}^{}(s)L_{m,n}(s)(-4\pi y)^{-\frac{k}{2}}W_{\mathrm{sgn}(n)\frac{k}{2}, s-\frac{1}{2}}(4\pi|n|y)e^{2\pi i n x}.
\end{align*}
For a nonzero integer $m$, the Taylor coefficients $F_{k,m,r}(z)$ and $G_{k,m,r}(z)$ are given by 
\begin{align*}
F_{k,m,r}(z) &= \frac{1}{r!}\frac{\partial^r}{\partial s^r} P_{k,m}(z,s)|_{s=1-\frac{k}{2}}\quad \text{for}\ k \leq 0,\\
G_{k,m,r}(z) &= \frac{1}{r!}\frac{\partial^r}{\partial s^r} P_{k,m}(z,s)|_{s=\frac{k}{2}}\quad \text{for}\ k \geq 2.
\end{align*}
As remarks, first for $k=0, 2$, this Fourier expansion form gives the analytic continuation of $P_{k,m}(z,s)$ to $s=1$, that is, its termwise derivatives in $s$ is valid, (see \cite[Section 3.2]{DIT2}). Secondary, for a nonzero integer $m$, this Poincar\'{e} series $P_{k,m}(z,s)$ has no pole at $s=k/2$ for $k\geq 2$, and $s=1-k/2$ for $k \leq 0$. Then we have the Fourier-Whittaker expansion forms of $F_{k,m,r}(z)$ and $G_{k,m,r}(z)$,
\begin{align}\label{FFourier}
\begin{split}
r! (-4\pi)^{\frac{k}{2}}F_{k,m,r}(z) 
&=\sum_{j=0}^r (-1)^j {{r}\choose{j}}\frac{\partial^{r-j}}{\partial s^{r-j}} \biggl(\frac{\Gamma(2s)}{\Gamma(s+\mathrm{sgn}(m)k/2)}e^{-s\pi i}\biggr)\bigg|_{s=1-\frac{k}{2}} u_{k,m}^{[j],-}(y)e^{2\pi i m x}\\
&+ \sum_{j=0}^r (-1)^j {{r}\choose{j}}\frac{\partial^{r-j}}{\partial s^{r-j}} \biggl( \frac{\Gamma(2s)}{\Gamma(s-\mathrm{sgn}(m)k/2)}\biggr)\bigg|_{s=1-\frac{k}{2}} u_{k,m}^{[j],+}(y)e^{2\pi i m x} \\
&+ \sum_{j=0}^r (-1)^j {{r}\choose{j}}\frac{\partial^{r-j}}{\partial s^{r-j}}\biggl(g_{k,m,0}^{}(s)L_{m,0}(s)\biggr)\bigg|_{s=1-\frac{k}{2}} u_{k,0}^{[j],+}(y)\\
&+ \sum_{j=0}^r\sum_{n \neq 0} (-1)^j{{r}\choose{j}}\frac{\partial^{r-j}}{\partial s^{r-j}} \biggl(g_{k,m,n}^{}(s)L_{m,n}(s) \biggr)\bigg|_{s=1-\frac{k}{2}}u_{k,n}^{[j],-}(y)e^{2\pi inx},
\end{split}
\end{align}
and
\begin{align}\label{GFourier}
\begin{split}
r! (-4\pi)^{\frac{k}{2}}G_{k,m,r}(z) 
&=\sum_{j=0}^r {{r}\choose{j}}\frac{\partial^{r-j}}{\partial s^{r-j}} \biggl(\frac{\Gamma(2s)}{\Gamma(s+\mathrm{sgn}(m)k/2)}e^{-s\pi i}\biggr)\bigg|_{s=\frac{k}{2}} u_{k,m}^{[j],-}(y)e^{2\pi i m x}\\
&+ \sum_{j=0}^r {{r}\choose{j}}\frac{\partial^{r-j}}{\partial s^{r-j}} \biggl( \frac{\Gamma(2s)}{\Gamma(s-\mathrm{sgn}(m)k/2)}\biggr)\bigg|_{s=\frac{k}{2}} u_{k,m}^{[j],+}(y)e^{2\pi i m x} \\
&+ \sum_{j=0}^r {{r}\choose{j}}\frac{\partial^{r-j}}{\partial s^{r-j}}\biggl(g_{k,m,0}^{}(s)L_{m,0}(s)\biggr)\bigg|_{s=\frac{k}{2}} u_{k,0}^{[j],-}(y)\\
&+ \sum_{j=0}^r\sum_{n \neq 0}{{r}\choose{j}} \frac{\partial^{r-j}}{\partial s^{r-j}} \biggl(g_{k,m,n}^{}(s)L_{m,n}(s) \biggr)\bigg|_{s=\frac{k}{2}}u_{k,n}^{[j],-}(y)e^{2\pi inx},
\end{split}
\end{align}
where we use the property $\frac{\partial^j}{\partial \nu^j}W_{\mu,\nu}(z)|_{\nu = \nu_0} = (-1)^j\frac{\partial^j}{\partial \nu^j}W_{\mu,\nu}(z)|_{\nu=-\nu_0}$. By these Fourier-Whittaker expansion forms, we see that $F_{k,m,r}(z), G_{k,m,r}(z) \in H_k^{r+1,!}$. In particular, since $G_{k,m,r}(z)$ satisfies the conditions for (\ref{halfdep}), $G_{k,m,r}(z)$ has a half-integral depth $r+1/2$.\\

Let $k\leq 0$ and $m >0$. By (\ref{FFourier}), we immediately see that
\begin{align*}
F_{k,-m,0}(z) 
&=(1-k)! m^{\frac{k}{2}} q^{-m}\\ 
&\quad-(1-k) m^{\frac{k}{2}} \Gamma(1-k,4\pi my)q^{-m} - (-4\pi^2 m)^{1-\frac{k}{2}}L_{-m,0}\bigl(1-\frac{k}{2}\bigr)\\
&\quad+ (-1)^{\frac{k}{2}}\sum_{n > 0} 2\pi(1-k)!\sqrt{m/n} L_{-m,n}\bigl(1-\frac{k}{2}\bigr) n^{\frac{k}{2}}q^n\\
&\quad+ (-1)^{\frac{k}{2}}\sum_{n < 0} 2\pi(1-k)\sqrt{|m/n|}L_{-m,n}\bigl(1-\frac{k}{2}\bigr) |n|^{\frac{k}{2}}\Gamma(1-k,4\pi |n|y)q^n.
\end{align*}
Comparing with Duke-Jenkins basis $f_{k,m}(z) = q^{-m} + \sum_{n > \ell_k} a_{k}(m,n)q^n$, for $k<0$ and $m\geq -\ell_k >0$, we see that
\begin{align*}
(1-k)! f_{k,m}(z) - \Biggl\{ m^{-\frac{k}{2}}F_{k,-m,0}(z) + \sum_{\ell_k< n < 0}a_{k}(m,n)|n|^{-\frac{k}{2}}F_{k,n,0}(z)\Biggr\}
\end{align*}
is a harmonic function and bounded on the upper half plane $\mathfrak{H}$. Thus this difference is a constant, that is, equal to 0. For $k=0$ and $m > -\ell_0 = 0$, similarly we have
\begin{align*}
f_{0,m}(z) - F_{0,-m,0}(z) = -4\pi^2mL_{-m,0}(1).
\end{align*}
By Ramanujan \cite{R} it is known that
\begin{align}\label{Ramanujan}
L_{-m,0}(1) = \frac{6}{m\pi^2}\sigma_1(m).
\end{align}
Then it holds that $f_{0,m}(z) = F_{0,-m,0}(z) -24\sigma_1(m)$. In addition, $f_{0,0}(z) = F_{0,0,0}(z) = 1$.\\

As for $k \geq 2$, it is known that the functions $G_{k,m,0}(z)$ with $m>0$ span the space $S_k$ of holomorphic cusp forms. More precisely, Rhoades \cite{Rh} showed the following theorem.
\begin{thm}\cite[Theorem 1.21]{Rh}
Let $k \in 2\mathbb{Z}$ with $k \geq 2$ and $\mathcal{I}$ be a finite set of positive integers. Then
\begin{align*}
\sum_{m \in \mathcal{I}} \overline{\alpha_m} G_{k,m,0}(z) \equiv 0
\end{align*}
if and only if there exists a weakly holomorphic modular form of weight $2-k$ with principal part at $\infty$ equal to
\begin{align*}
\sum_{m \in \mathcal{I}} \frac{\alpha_m}{m^{k/2-1}} q^{-m}.
\end{align*}
\end{thm}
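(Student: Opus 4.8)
The plan is to recognize $G_{k,m,0}$ as a rescaled holomorphic Poincar\'e series and then reduce the statement to the standard duality between $S_k$ and the principal parts of weight-$(2-k)$ weakly holomorphic modular forms. First I would check that $G_{k,m,0}(z)\in S_k$ for $m>0$: holomorphy is the $r=0$ case of Theorem \ref{Main2}, namely $\xi_kG_{k,m,0}=(4\pi)^{1-k}F_{2-k,-m,-1}=0$; and the absence of Fourier terms of index $\le 0$ follows by inspecting the Fourier expansion of Proposition \ref{Fourier} at $s=k/2$, where, rewriting the $M$-Whittaker function through (\ref{MMW}), the coefficient of the $\mathcal{M}^+$-term, the factor $g_{k,m,0}(s)$, and the factors $g_{k,m,n}(s)$ with $n<0$ each carry the vanishing quantity $1/\Gamma(s-k/2)$. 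Evaluating the defining series at $s=k/2$ and using the Whittaker identities recalled in Section \ref{s2} (so that $\phi_{k,m}(z,k/2)=m^{k/2}q^m$ for $m>0$) then yields
\begin{align*}
G_{k,m,0}(z)=m^{k/2}\sum_{\gamma\in\Gamma_\infty\backslash\Gamma}q^m|_k\gamma,
\end{align*}
that is, $m^{k/2}$ times the holomorphic Poincar\'e series of weight $k$ and index $m$. (When $k=2$ one uses instead the analytic continuation of $P_{2,m}(z,s)$ to $s=1$ from \cite{DIT2}; this is the only place requiring separate, routine, care.)

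Next I would invoke the classical Petersson unfolding identity $\langle f,\sum_{\gamma}q^m|_k\gamma\rangle=\tfrac{\Gamma(k-1)}{(4\pi m)^{k-1}}a_f(m)$ for $f\in S_k$, which combined with the previous step gives
\begin{align*}
\langle f,G_{k,m,0}\rangle=\frac{\Gamma(k-1)}{(4\pi)^{k-1}}\cdot\frac{a_f(m)}{m^{k/2-1}}.
\end{align*}
Since $\sum_{m\in\mathcal{I}}\overline{\alpha_m}G_{k,m,0}$ lies in the finite-dimensional space $S_k$, on which the Petersson form is nondegenerate, and since that form is conjugate-linear in its second slot, the relation $\sum_{m\in\mathcal{I}}\overline{\alpha_m}G_{k,m,0}\equiv 0$ is equivalent to
\begin{align*}
\sum_{m\in\mathcal{I}}\frac{\alpha_m}{m^{k/2-1}}a_f(m)=0\qquad\text{for every }f\in S_k;
\end{align*}
this is what accounts for $\overline{\alpha_m}$ appearing on one side of the theorem and $\alpha_m$ on the other.

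It remains to match this numerical condition with the existence of a $g\in M_{2-k}^!$ whose principal part equals $\sum_{m\in\mathcal{I}}\frac{\alpha_m}{m^{k/2-1}}q^{-m}$. If such a $g$ exists then, for any $f\in S_k$, the product $fg$ lies in $M_2^!$, whose $q$-expansion constant term vanishes by the residue theorem on the compactified modular curve, and that constant term equals $\sum_m\frac{\alpha_m}{m^{k/2-1}}a_f(m)$; so the numerical condition holds. For the converse I would build $g$ from the paper's own objects: assuming $\sum_m\overline{\alpha_m}G_{k,m,0}\equiv 0$, set $g:=\sum_{m\in\mathcal{I}}\overline{\alpha_m}F_{2-k,-m,0}$, so that Theorem \ref{Main2} gives $\xi_{2-k}g=(4\pi)^{k-1}(k-1)\sum_m\overline{\alpha_m}G_{k,m,0}=0$; hence $g$ is holomorphic and, by modularity and growth, lies in $M_{2-k}^!$. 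By (\ref{FFourier}) the unique Fourier term of negative index in $F_{2-k,-m,0}$ sits at $q^{-m}$ with coefficient a fixed nonzero constant (independent of $m$) times $(k-1)!\,m^{1-k/2}$, and holomorphy of $g$ forces the non-holomorphic parts of the $F_{2-k,-m,0}$ to cancel, so the principal part of $g$ is a nonzero constant multiple of $\sum_m\frac{\overline{\alpha_m}}{m^{k/2-1}}q^{-m}$; replacing $g$ by a suitable scalar multiple of its complex conjugate (legitimate since $M_{2-k}^!$ admits a rational basis) produces the form in the statement. Alternatively one could simply quote the classical obstruction/duality principle (Borcherds, Bruinier--Funke). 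The genuine content of the theorem is exactly this duality; everything else is careful bookkeeping with Whittaker functions and normalization constants, and the only point demanding extra --- though entirely routine --- care is $k=2$, where $P_{2,m}(z,s)$ must be reached by analytic continuation.
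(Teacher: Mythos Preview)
The paper does not give its own proof of this theorem: it is quoted from Rhoades' thesis \cite{Rh} and used as a black box, with the only comment being that the normalizations differ. So there is nothing in the paper to compare your argument against, and the question is simply whether your proof is correct. It is.

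Your identification $G_{k,m,0}(z)=m^{k/2}\sum_{\gamma\in\Gamma_\infty\backslash\Gamma}q^m|_k\gamma$ for $k\ge 4$, $m>0$ is right (indeed $M_{k/2,(k-1)/2}(y)=y^{k/2}e^{-y/2}$, so $\phi_{k,m}(z,k/2)=m^{k/2}q^m$), and together with the Petersson coefficient formula and nondegeneracy on $S_k$ this reduces the vanishing to the linear condition $\sum_{m}\alpha_m m^{1-k/2}a_f(m)=0$ for all $f\in S_k$, exactly as you say. The equivalence of that condition with the existence of a form in $M_{2-k}^!$ having the prescribed principal part is the standard Serre duality/Borcherds obstruction; your two arguments for the two directions (constant term of $fg\in M_2^!$ vanishes; build $g=\sum_m\overline{\alpha_m}F_{2-k,-m,0}$ and show $\xi_{2-k}g=0$ via Theorem~\ref{Main2}) are both valid. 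For $k=2$ the statement is trivial since $S_2=\{0\}$ (equivalently $G_{2,m,0}=0$ for $m>0$) and every principal part in weight $0$ is realized by a polynomial in $j$.

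One wording issue: when you pass from principal part $\sum_m\overline{\alpha_m}m^{1-k/2}q^{-m}$ to $\sum_m\alpha_m m^{1-k/2}q^{-m}$, the operation you want is not the complex conjugate of $g$ (which is antiholomorphic) but the form $\overline{g(-\bar z)}$, i.e.\ conjugating the Fourier coefficients; this stays in $M_{2-k}^!$ because $\mathrm{SL}_2(\mathbb{Z})$ is defined over $\mathbb{Z}$. Alternatively, since the $G_{k,m,0}$ have real Fourier coefficients (the Kloosterman sums and $J$-Bessel values are real), the bar on $\alpha_m$ is in fact immaterial for $\mathrm{SL}_2(\mathbb{Z})$.
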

Here this theorem looks different from \cite[Theorem 1.21]{Rh}, because his definition of the Poincar\'{e} series and ours are slightly different. By this theorem and the relation $\ell_{2-k} = -1-\ell_k$, we see that $\{G_{k,m,0}(z)\ |\ 0 < m \leq \ell_k\}$ is a basis for $S_k$. As for $k \geq 2$ and $-m <0$, 
\begin{align*}
G_{k,-m,0}(z) 
&= m^{\frac{k}{2}}q^{-m} + (-1)^{\frac{k}{2}}\sum_{n > 0} 2\pi \sqrt{m/n} L_{-m,n}\bigl(\frac{k}{2}\bigr) n^{\frac{k}{2}}q^n.
\end{align*}
Canceling out the pole at the cusp, we have that
\begin{align}\label{fG}
f_{k,m}(z) - m^{-\frac{k}{2}}G_{k,-m,0}(z)
\end{align}
is a holomorphic cusp form for weight $k > 2$. For $k=2$ and $m>0$, since $f_{2,m}(z) = q^{-m} + \sum_{n=0}^{\infty}a_2(m,n)q^n$ holds, the function (\ref{fG}) is a holomorphic modular form with the constant term $a_2(m,0)$. By $f_{0,0}(z) =1$ and the duality (\ref{dual}), we see $a_2(m,0) = -a_0(0,m) = 0$. Then (\ref{fG}) for $k=2$ is also a holomorphic cusp form. Finally, for $k>2$ and $m=0$ we see that
\begin{align*}
f_{k,0}(z) - E_k(z) = f_{k,0}(z) - \frac{\pi^{\frac{k}{2}}}{\bigl(\frac{k}{2}-1\bigr)k!\zeta(k)}G_{k,0}(z)
\end{align*}
is a holomorphic cusp form. In conclusion, we obtain the following proposition.
\begin{prp}\label{basis}
For an even integer $k \in 2\mathbb{Z}$, we define an integer $\ell_k$ by $k = 12\ell_k + k'$ where $k' \in \{0,4,6,8,10,14\}$. For each integer $m \geq -\ell_k$, the unique weakly holomorphic modular form $f_{k,m}(z) = q^{-m} + \sum_{n > \ell_k}a_k(m,n)q^n$ is expressed in terms of the functions $F_{k,m,0}(z)$, $G_{k,m,0}(z)$ as follows.
\begin{enumerate}
\item For $k \leq -2$, 
\begin{align*}
f_{k,m}(z) = \frac{1}{(1-k)!} \Biggl\{ m^{-\frac{k}{2}}F_{k,-m,0}(z) + \sum_{\ell_k< n < 0}a_k(m,n)|n|^{-\frac{k}{2}}F_{k,n,0}(z)\Biggr\}.
\end{align*}
\item For $k=0$ and $m>0$, $f_{0,m}(z) = F_{0,-m,0}(z) -24\sigma_1(m)$, and $f_{0,0}(z) = F_{0,0,0}(z) =1$.
\item For $k \geq 2$, the set $\{G_{k,m,0}(z)\ |\ 0<m\leq\ell_k\}$ is a basis for the space $S_k$ of holomorphic cusp forms.
\begin{enumerate}
\item For $m>0$, $f_{k,m}(z) -m^{-\frac{k}{2}}G_{k,-m,0}(z)$ is a holomorphic cusp form. 
\item For $m=0$, $f_{k,0}(z) - \pi^{\frac{k}{2}}\bigl\{\bigl(\frac{k}{2}-1\bigr)k!\zeta(k)\bigr\}^{-1}G_{k,0,0}(z)$ is a holomorphic cusp form.
\item For $m<0$, $f_{k,m}(z)$ is a holomorphic cusp form.
\end{enumerate}
\end{enumerate}
For all $k \in 2\mathbb{Z}$, the set $\{f_{k,m}(z)\ |\ m \geq -\ell_k\}$ is a basis for $M_k^!$.
\end{prp}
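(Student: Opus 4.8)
The plan is to assemble the explicit computations already made in this section into the single statement, organized by the sign of $k$; the mechanism will be uniform. From the Fourier–Whittaker expansions \eqref{FFourier} and \eqref{GFourier} of the harmonic forms $F_{k,m,0}(z)$ and $G_{k,m,0}(z)$ — these being $\Delta_k$-harmonic because the Maass–Poincar\'e series is evaluated at the point where its eigenvalue $(s-k/2)(1-k/2-s)$ vanishes — I would form the precise linear combination of such functions whose pole at $\infty$ matches the pole prescribed by the Duke–Jenkins normalization $f_{k,m}(z)=q^{-m}+\sum_{n>\ell_k}a_k(m,n)q^n$, subtract $f_{k,m}(z)$ (up to the evident scalar), and argue that the remainder is forced to be $0$, a known constant, or a holomorphic cusp form.

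For weights $k\le-2$: \eqref{FFourier} shows that $m^{-k/2}F_{k,-m,0}(z)+\sum_{\ell_k<n<0}a_k(m,n)|n|^{-k/2}F_{k,n,0}(z)$ has holomorphic part with principal part $(1-k)!\bigl(q^{-m}+\sum_{\ell_k<n<0}a_k(m,n)q^n\bigr)$, and non-holomorphic part built only from incomplete-gamma terms carried by negative Fourier indices (which decay as $y\to\infty$) together with one constant-in-$z$ term. Subtracting $(1-k)!\,f_{k,m}(z)$ leaves a bounded harmonic Maass form of weight $k\le-2$; by Theorem \ref{LRthm}(1) such a form is a multiple of $F_{k,0}(z)$, but $\xi_kF_{k,0}(z)=G_{2-k,0}(z)$ is a nonzero multiple of the Eisenstein series $E_{2-k}(z)$ (recursion \eqref{rec} and the Remark after Theorem \ref{LRthm}), whereas $\xi_k$ of the difference is a holomorphic $q$-series vanishing at $\infty$ by Lemma \ref{xiop}, hence cuspidal; so the difference is $0$, which is formula (1). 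The same computation at $k=0$ leaves only the constant $-4\pi^2m\,L_{-m,0}(1)$, which Ramanujan's evaluation \eqref{Ramanujan} turns into $-24\sigma_1(m)$; the case $m=0$ is the normalization $F_{0,0,0}(z)=1$.

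For weights $k\ge2$: I would first deduce that $\{G_{k,m,0}(z)\mid 0<m\le\ell_k\}$ is a basis of $S_k$ from the theorem of Rhoades quoted above — a relation $\sum\overline{\alpha_m}G_{k,m,0}(z)\equiv0$ over a finite set of positive $m$ is equivalent to a weakly holomorphic form of weight $2-k$ with principal part $\sum\alpha_m m^{1-k/2}q^{-m}$, and since $\ell_{2-k}=-1-\ell_k$ no such nonzero form is supported on $\{-m\mid 0<m\le\ell_k\}$, so these $\ell_k=\dim S_k$ forms are independent. The remaining assertions come by cancelling poles in \eqref{GFourier}, using that $G_{k,m',0}(z)$ is holomorphic (it has depth $1/2$, so $\xi_kG_{k,m',0}(z)=0$; cf.\ \eqref{halfdep}): for $m>0$, $f_{k,m}(z)-m^{-k/2}G_{k,-m,0}(z)$ is a holomorphic modular form of weight $k$, without constant term if $k\ge4$ and, if $k=2$, with constant term $a_2(m,0)=-a_0(0,m)=0$ by the duality \eqref{dual} and $f_{0,0}(z)=1$; for $m=0$ and $k>2$ the indicated scalar multiple of $G_{k,0,0}(z)=G_{k,0}(z)$ equals $E_k(z)$ by the Remark after Theorem \ref{LRthm}, killing the constant term; and for $m<0$ (forcing $\ell_k\ge1$) the form $f_{k,m}(z)$ is already holomorphic and vanishes at $\infty$. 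In each subcase the outcome is a holomorphic modular form vanishing at $\infty$, i.e.\ a cusp form. The closing statement that $\{f_{k,m}(z)\mid m\ge-\ell_k\}$ is a basis of $M_k^!$ is \cite[Theorem 2]{DJ}, recalled at the start of the section.

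I expect the main obstacle to be bookkeeping rather than anything conceptual: one must track exactly which terms of \eqref{FFourier} and \eqref{GFourier} survive each cancellation, confirm they are decaying, a single constant, or absent, and line up every pole order against the Duke–Jenkins indices by way of $\ell_{2-k}=-1-\ell_k$. The genuinely nontrivial inputs are confined to the negative-weight vanishing argument — which relies on the classification of $H_k$ in Theorem \ref{LRthm}(1) together with the non-cuspidality of $E_{2-k}(z)$ — and to the basis of $S_k$, which relies on Rhoades' relation theorem and, for $k=0$, on Ramanujan's identity \eqref{Ramanujan}.
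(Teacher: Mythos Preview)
Your proposal is correct and follows the same overall strategy as the paper: form the combination of $F_{k,\cdot,0}$ or $G_{k,\cdot,0}$ matching the principal part of $f_{k,m}$, subtract, and identify the remainder. The only substantive difference is in the $k\le-2$ vanishing step. The paper argues more directly: the difference is bounded and lies in $H_k^1=\langle F_{k,0}\rangle$, but $F_{k,0}$ carries a $y^{1-k}$ term and is therefore unbounded, so the coefficient must vanish (the paper phrases this as ``hence a constant, equal to $0$''). Your route through $\xi_k$---showing the image is cuspidal while $\xi_kF_{k,0}=G_{2-k,0}$ is Eisenstein---is also valid but more circuitous; the boundedness argument already finishes the job without invoking Lemma~\ref{xiop} or the cuspidality of the $G_{2-k,-n,0}$.
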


\section{Proof of Theorem \ref{Main2}}\label{s4}
We consider the Taylor expansion form of $P_{k,m}(z,s)$ again,
\begin{align*}
P_{k,m}(z,s) = \left\{\begin{array}{ll}
\sum_{r=0}^{\infty} F_{k,m,r}(z)\bigl(s+\frac{k}{2}-1\bigr)^r \quad &\text{if } k \leq 0, \\
\ \\
\sum_{r=0}^{\infty} G_{k,m,r}(z)\bigl(s-\frac{k}{2}\bigr)^r \quad &\text{if } k \geq 2.
\end{array} \right.
\end{align*}
Let $k \leq 0$, we have
\begin{align*}
\xi_k P_{k,m}(z,s) = \sum_{r=0}^{\infty}\xi_k F_{k,m,r}(z)\bigl(\bar{s}+\frac{k}{2}-1\bigr)^r.
\end{align*}
On the other hand, by (\ref{xiP}), 
\begin{align*}
\xi_k P_{k,m}(z,s) &= (4\pi)^{1-k}\bigl(\bar{s}-\frac{k}{2}\bigr)P_{2-k,-m}(z,\bar{s})\\
&= (4\pi)^{1-k}\bigl(1-k+\bar{s}+\frac{k}{2}-1\bigr)\sum_{r=0}^{\infty} G_{2-k,-m,r}(z)\bigl(\bar{s}+\frac{k}{2}-1\bigr)^r\\
&= (4\pi)^{1-k}\Biggl\{(1-k)\sum_{r=0}^{\infty} G_{2-k,-m,r}(z)\bigl(\bar{s}+\frac{k}{2}-1\bigr)^r + \sum_{r=0}^{\infty} G_{2-k,-m,r}(z)\bigl(\bar{s}+\frac{k}{2}-1\bigr)^{r+1}\Biggr\}.
\end{align*}
Term by term comparison in $\bigl(\bar{s}+\frac{k}{2}-1\bigr)^r$ yields
\begin{align*}
\xi_kF_{k,m,r}(z) = (4\pi)^{1-k}\biggl\{(1-k)G_{2-k,-m,r}(z) + G_{2-k,-m,r-1}(z)\biggr\}.
\end{align*}

Let $k \geq 2$, we have
\begin{align*}
\xi_kP_{k,m}(z,s) = \sum_{r=0}^{\infty} \xi_kG_{k,m,r}(z)\bigl(\bar{s}-\frac{k}{2}\bigr)^r.
\end{align*}
Similarly, 
\begin{align*}
\xi_kP_{k,m}(z,s) &= (4\pi)^{1-k}\bigl(\bar{s}-\frac{k}{2}\bigr)P_{2-k,-m}(z,\bar{s})\\
&= (4\pi)^{1-k}\bigl(\bar{s}-\frac{k}{2}\bigr) \sum_{r=0}^{\infty} F_{2-k,-m,r}(z)\bigl(\bar{s}-\frac{k}{2}\bigr)^r.
\end{align*}
Then we have
\begin{align*}
\xi_kG_{k,m,r}(z) = (4\pi)^{1-k}F_{2-k,-m,r-1}(z).
\end{align*}

\section{Proof of Theorem \ref{Main1}}\label{s5}
By the Fourier-Whittaker expansion form of $F_{k,m,r}(z)$ and $G_{k,m,r}(z)$ in Section \ref{s3.5}, it can be easily checked that, in the description of Proposition \ref{FWE}, the Fourier-Whittaker coefficients of $u_{k,m}^{[r],+}(y)e^{2\pi imx}$ satisfy
\begin{align*}
r!(-4\pi)^{\frac{k}{2}}c_{m,r}^+ &= \frac{(-1)^r \Gamma(2s)}{\Gamma(s-\mathrm{sgn}(m)k/2)}\bigg|_{s=1-\frac{k}{2}} = \frac{(-1)^r \Gamma(2-k)}{\Gamma(1-\frac{k}{2}-\mathrm{sgn}(m)k/2)} \neq 0,\quad \text{for}\ k \leq 0,\\
r!(-4\pi)^{\frac{k}{2}}c_{m,r}^+ &= \frac{\Gamma(2s)}{\Gamma(s-\mathrm{sgn}(m)k/2)}\bigg|_{s=\frac{k}{2}} = \frac{\Gamma(k)}{\Gamma(\frac{k}{2}-\mathrm{sgn}(m)k/2)} = 1 \neq 0,\quad \text{for}\ k\geq 2,\ m < 0.
\end{align*}
For the case of $k \geq 2$ and $m>0$, since $\Gamma(2s)/\Gamma(s-\mathrm{sgn}(m)k/2)$ has a simple zero at $s=k/2$, we have
\begin{align*}
r!(-4\pi)^{\frac{k}{2}}c_{m,r}^+ &= \biggl( \frac{\Gamma(2s)}{\Gamma(s-\mathrm{sgn}(m)k/2)}\biggr)\bigg|_{s=\frac{k}{2}} = 0,\\
r!(-4\pi)^{\frac{k}{2}}c_{m,r-1}^+ &= r\cdot \frac{\partial}{\partial s} \biggl( \frac{\Gamma(2s)}{\Gamma(s-\mathrm{sgn}(m)k/2)}\biggr)\bigg|_{s=\frac{k}{2}} \neq 0.
\end{align*}
Thus for an even integer $k \leq 0$ and an arbitrary $f(z) \in H_k^{r,!}$, canceling out all exponentially growing terms $u_{k,n}^{[j],+}(y)$, we have
\begin{align*}
f(z) - \sum_{j=0}^{r-1}\sum_{m\in S} a_{k,m,j}^{} F_{k,m,j}(z) \in H_k^r,
\end{align*}
where $S \subset \mathbb{Z}$ is a finite subset and $a_{k,m,j} \in \mathbb{C}$. By Theorem \ref{LRthm}, the set $\{F_{k,m,r-1}(z)\ |\ m \in \mathbb{Z}\}$ is a generating set of $H_k^{r,!}/H_k^{r-1,!}$, where we put $F_{k,0,r-1}(z) := F_{k,r-1}(z)$ in Theorem \ref{LRthm}. Next we show that the subset $\{F_{k,m,r-1}(z)\ |\ m \geq -\ell_{2-k}\}$ is a basis of $H_k^{r,!}/H_k^{r-1/2,!}$. For any finite subset $S \subset \mathbb{Z}_{\geq -\ell_{2-k}}$, we assume that
\begin{align*}
\sum_{m\in S} a_{k,m,j}^{} F_{k,m,r-1}(z) =0.
\end{align*}
Taking the action of $\xi_k \circ \Delta_k^{r-1}$, we have
\begin{align}\label{xidelta}
\xi_k \circ \Delta_k^{r-1} \Biggl(\sum_{m\in S} a_{k,m,r-1}^{} F_{k,m,r-1}(z)\Biggr) &= \sum_{m \in S}b_{k,m,r-1}G_{2-k,-m,0}(z),
\end{align}
where
\begin{align*}
b_{k,m,r-1} = \left\{\begin{array}{ll}
-(k-1)^r(4\pi)^{1-k}a_{k,m,r-1}^{} \quad &\text{if } m \neq 0, \\
(k-1)^{r-1}a_{k,0,r-1}^{} \quad &\text{if } m=0.
\end{array} \right.
\end{align*}
The result (\ref{xidelta}) is a weakly holomorphic modular form of weight $2-k$, and by assumption it is equal to $0$. By Proposition \ref{basis}, we can see that the set $\{G_{2-k,-m,0}(z)\ |\ m \geq -\ell_{2-k}\}$ is a basis for $M_{2-k}^!$. Then each coefficient $b_{k,m,r-1}=0$, that is, $a_{k,m,r-1}^{} =0$. Thus $\{F_{k,m,r-1}(z)\ |\ m \geq -\ell_{2-k}\}$ is linearly independent. On the other hand, for $m < -\ell_{2-k}$, there exists a finite subset $S \subset \mathbb{Z}_{\geq -\ell_{2-k}}$ and $a_{k,n,r-1} \in \mathbb{C}$ such that
\begin{align*}
\xi_k \circ \Delta_k^{r-1} \Biggl(F_{k,m,r-1}(z) - \sum_{n \in S}a_{k,n,r-1}^{}F_{k,n,r-1}(z)\Biggr) = 0,
\end{align*}
that is, $F_{k,m,r-1}(z)$ is written as a linear combination of $F_{k,n,r-1}(z)$ with $n \geq -\ell_{2-k}$ in $H_k^{r,!}/H_k^{r-1/2,!}$. Thus for $k \leq 0$, we conclude that $\{F_{k,m,r-1}(z)\ |\ m \geq -\ell_{2-k}\}$ is a basis of $H_k^{r,!}/H_k^{r-1/2,!}$. Since it holds that $\ell_{2-k}=-1-\ell_k$, we obtain Theorem \ref{Main1} (1-a). By the same calculation as (\ref{xidelta}) and Proposition \ref{basis}, 
\begin{align*}
\Biggl\{ \tilde{F}_{k,m,r-1}(z) := |m|^{-\frac{k}{2}}F_{k,m,r-1}(z) + \sum_{\ell_k<n<0}a_k(-m,n)|n|^{-\frac{k}{2}}F_{k,n,r-1}(z)\ \Biggr|\ m\leq \ell_k\Biggr\}
\end{align*}
is a basis of $H_k^{r-1/2,!}/H_k^{r-1,!}$. The key property is for $k \leq -2$ and $m\geq -\ell_k>0$,
\begin{align*}
\Delta_k^{r-1}\tilde{F}_{k,-m,r-1}(z) &= (k-1)^{r-1} \Biggl( m^{-\frac{k}{2}}F_{k,-m,0}(z) + \sum_{\ell_k<n<0}a_k(m,n)|n|^{-\frac{k}{2}}F_{k,n,0}(z)\Biggr)\\
&= (k-1)^{r-1}(1-k)!f_{k,m}(z).
\end{align*}
As for $k=0$ and $m \geq -\ell_0 = 0$, 
\begin{align*}
\Delta_0^{r-1}\tilde{F}_{0,-m,r-1}(z) &= (-1)^{r-1}F_{0,-m,0}(z)\\
&= (-1)^{r-1} \left\{\begin{array}{ll}
f_{0,m}(z) + 24\sigma_1(m) \quad &\text{if } m > 0, \\
f_{0,0}(z) \quad &\text{if } m = 0.
\end{array} \right.
\end{align*}
For an arbitrary $f \in H_k^{r-1/2,!}$, it holds that $\Delta_k^{r-1}f \in H_k^{1/2,!} = M_k^!$ by Proposition \ref{xilift}. Then $f$ can be expressed as a linear combination of $\tilde{F}_{k,m,r-1}(z)$ with $m \leq \ell_k$ in $H_k^{r-1.2,!}/H_k^{r-1,!}$. By these properties, we obtain Theorem \ref{Main1} (1-b).\\

As for the case of $k \geq 2$, we see that the set $\{ G_{k,m,r}(z)\ |\ m\in \mathbb{Z}\}$ is a generating set of $H_k^{r+1/2,!}/H_k^{r-1/2,!}$. Similarly, we can take $\{G_{k,m,r}(z)\ |\ m \leq \ell_k\}$ as a basis of $H_k^{r+1/2,!}/H_k^{r,!}$. Moreover, by Proposition \ref{basis},
\begin{align*}
\Biggl\{ \tilde{G}_{k,m,r}(z) := m^{\frac{k}{2}-1}G_{k,m,r}(z) - \sum_{0<n \leq \ell_k}a_k(-n,m)n^{\frac{k}{2}-1}G_{k,n,r}(z)\ \Biggr|\ m > \ell_k\Biggr\}
\end{align*}
is a basis of $H_k^{r,!}/H_k^{r-1/2,!}$. The key properties are 
\begin{align*}
\xi_k \circ \Delta_k^{r-1} \tilde{G}_{k,m,r}(z) &= (4\pi)^{1-k}(1-k)^{r-1}(k-1)! f_{2-k,m}(z) &\text{for } m>0, k\geq 2,\\
\xi_2 \circ \Delta_2^{r-1} \tilde{G}_{2,0,r}(z) &= (-1)^{r-1}F_{0,0,0}(z) = (-1)^{r-1}f_{0,0}(z) &\text{for } m=0, k=2,
\end{align*}
and $m>\ell_k \iff m \geq -\ell_{2-k}$. This concludes the proof of Theorem \ref{Main1}.

\section{Examples}\label{s6}
In this last section, we follow the expositions of some examples of the functions $F_{k,m,r}(z)$, $G_{k,m,r}(z)$ related to the previous works by \cite{DIT2}, \cite{JKK}, and \cite{O}.\\

(1) In the cases of $k=12$ and $-10$. Ono \cite{O} constructed a mock modular form $M_{\Delta}(z)$ for the discriminant function $\Delta(z)$ of the form
\begin{align*}
\frac{1}{11!} \cdot M_{\Delta}(z) = \frac{1}{q} + \frac{24}{B_{12}} + O(q),
\end{align*}
where $B_{12} = -691/2730$ is the 12th Bernoulli number. This function satisfies the property that
\begin{align*}
R_{\Delta}(z) := M_{\Delta}(z) + (2\pi)^{11} \cdot 11i \cdot \beta_{\Delta} \int_{-\bar{z}}^{i \infty} \frac{\overline{\Delta(-\bar{\tau})}}{(-i(\tau+z))^{-10}}d\tau
\end{align*}
is a weight $-10$ harmonic weak Maass form on $\mathrm{SL}_2(\mathbb{Z})$, where the constant $\beta_{\Delta} = 2.840287\dots$ is defined by $G_{12,1,0}(z) = \beta_{\Delta} \Delta(z)$. Then this function is expressed in our notations as $F_{-10,-1,0}(z) = R_{\Delta}(z)$.\\

(2) In the case of $k=2$. Duke-Imamo\={g}lu-T\'{o}th \cite{DIT2} gave the explicit formulas
\begin{align*}
G_{2,m,0}(z) = P_{2,m}(z,1) = \left\{\begin{array}{ll}
0 \quad &\text{if } m > 0, \\
-mq^m - \sum_{n>0} n c_{m}(n) q^n = -j'_m(z) \quad &\text{if } m < 0,
\end{array} \right.
\end{align*}
where $j_m \in \mathbb{C}[j]$ is the unique polynomial in the elliptic modular $j$-function having a Fourier expansion of the form
\begin{align*}
j_m(z) = q^m + \sum_{n=1}^{\infty}c_{m}(n)q^n.
\end{align*} 
Moreover they showed that the space $H_2^{1,!}$ is spanned by $\{G_{2,m,0}(z)\ |\ m<0\} \cup \{ E_2^*(z) \} \cup \{G_{2,m,1}(z)\ |\ m>0\}$. Furthermore they gave the Fourier-Whittaker expansion of $G_{2,m,1}(z)$ in terms of the regularized Petersson inner products.\\

(3) In the case of $k=0$. For a positive integer $m$, Jeon-Kang-Kim \cite{JKK} considered a polyharmonic Maass form $\widehat{J}_m(z)$ satisfying $\xi_k \widehat{J}_m(z) = 4\pi G_{2,m,1}(z)$ and $\Delta_0 \widehat{J}_m(z) = -j_{-m}(z) -24\sigma_1(m)$, and its cycle integral. According to this paper, their $\widehat{J}_m(z)$ coincides our $F_{0,-m,1}(z)$, and we get $F_{0,-m,0}(z) = j_{-m}(z) + 24\sigma_1(m)$ from (\ref{Ramanujan}), originated with Ramanujan \cite{R}.


\begin{thebibliography}{99}
\bibitem{AGOR} C. Alfes, M. Griffin, K. Ono, L. Rolen,
\emph{Weierstrass mock modular forms and elliptic curves}, Res. Number Theory, 1, (2015), 1--31.
\bibitem{AAS} S. Ahlgren, N. Andersen, D. Samart,
\emph{A polyharmonic Maass form of depth $3/2$ for $\mathrm{SL}_2(\mathbb{Z})$}, arXiv:1707.06117.
\bibitem{ALR} N. Andersen, J. C. Lagarias, R. C. Rhoades,
\emph{Shifted polyharmonic Maass forms for $PSL(2,\mathbb{Z})$}, arXiv:1708.01278.
\bibitem{BDR} K. Bringmann, N. Diamantis, M. Raum,
\emph{Mock period functions, sesquiharmonic Maass forms, and non-critical values of $L$-functions}, Adv. Math., 233, (2013), 115--134.
\bibitem{BOm} F. Brown, S. Omar,
\emph{Li's criterion for Epstein zeta functions, generalization of Kronecker's limit formula and the Gauss problem}, J. Number Theory, 158, (2016), 90--103.
\bibitem{BF} J. H. Bruinier, J. Funke,
\emph{On two geometric theta lifts}, Duke Math. J., 125, (2004), 45--90.
\bibitem{BFI} J. H. Bruinier, J. Funke, \"{O}. Imamo\={g}lu,
\emph{Regularized theta liftings and periods of modular functions}, J. Reine Angew. Math., 703, (2015), 43--93.
\bibitem{BOn} J. H. Bruinier, K. Ono,
\emph{Heegner divisors, L-functions, and harmonic weak Maass forms}, Ann. Math., 172 (2), no. 3, (2010), 2135--2181.
\bibitem{DIT1} W. Duke, \"{O}. Imamo\={g}lu, \'{A}. T\'{o}th,
\emph{Cycle integrals of the $j$-function and mock modular forms}, Ann. of Math., 173 (2), (2011), 947--981.
\bibitem{DIT2} W. Duke, \"{O}. Imamo\={g}lu, \'{A}. T\'{o}th,
\emph{Regularized inner products of modular functions}, Ramanujan J., 41, (2016), 13--29.
\bibitem{DIT3} W. Duke, \"{O}. Imamo\={g}lu, \'{A}. T\'{o}th,
\emph{Kronecker's first limit formula, revisited}, res. Math. Sci., 5, no. 2, (2018), 1--21.
\bibitem{DJ} W. Duke, P. Jenkins,
\emph{On the zeros and coefficients of certain weakly holomorphic modular forms}, Pure Appl. Math. Q. 4, (2008), no. 4, Special Issue: In honor of Jean-Pierre Serre. Part 1, 1327--1340.
\bibitem{GR} I. S. Gradshteyn, I. M. Ryzhik,
\emph{Table of integrals, series, and products}, Translated from the Russian. Sixth edition. Translation edited and with a preface by Alan Jeffrey and Daniel Zwillinger. Academic Press, Inc., San Diego, CA, (2000), xlvii+1163 pp. 
\bibitem{JKK3} D. Jeon, S.-Y. Kang, C. H. Kim,
\emph{Weak Maass-Poincar\'{e} series and weight $3/2$ mock modular forms}, J. Num. Theory, 133, (2013), 2567--2587.
\bibitem{JKK} D. Jeon, S.-Y. Kang, C. H. Kim,
\emph{Cycle integrals of a sesqui-harmonic Maass form of weight zero}, Journal of Number Theory, 141, (2014), 92--108.
\bibitem{JKK2} D. Jeon, S.-Y. Kang, C. H. Kim,
\emph{Bases of spaces of harmonic weak Maass forms and Shintani lifts on harmonic weak Maass forms}, preprint.
\bibitem{Kohnen} W. Kohnen,
\emph{Fourier coefficients of modular forms of half-integral weight}, Math. Ann., 271, (1985), 237--268.
\bibitem{LR} J. C. Lagarias, R. C. Rhoades,
\emph{Polyharmonic Maass forms for $PSL(2,\mathbb{Z})$}, Ramanujan J., 41, (2016), 191--232.
\bibitem{MOS} W. Magnus, F. Oberhettinger, R. Soni, 
\emph{Formulas and theorems for the special functions of mathematical physics}, Third enlarged edition. Die Grundlehren der mathematischen Wissenschaften, Band 52 Springer-Verlag New York, Inc., New York, (1966), viii+508 pp. 
\bibitem{M} T. Matsusaka,
\emph{Traces of CM values and cycle integrals of polyharmonic Maass forms}, arXiv:1805.02064.
\bibitem{N} D. Niebur,
\emph{A class of nonanalytic automorphic functions}, Nagoya Math. J., 52, (1973), 133--145.
\bibitem{O} K. Ono,
\emph{A mock theta function for the Delta-function}, in Proc. 2007 Integers Conf. Combinatorial Number Theory, (de Gruyter, Berlin, 2009), 141--155.
\bibitem{R} S. Ramanujan,
\emph{On certain trigonometrical sums and their applications in the theory of numbers}, Collected Papers of Srinivasa Ramanujan, vol. 179, AMS, Providence, (2000).
\bibitem{Rh} R. C. Rhoades,
\emph{Interplay between weak Maass forms and modular forms and statistical properties of number theoretic objects}, Ph. D Thesis, The University of Wisconsin, Madison, (2008), 148 pp.
\bibitem{S} T. Shintani, 
\emph{A proof of the classical Kronecker limit formula}, Tokyo J. Math., 3, no. 2, (1980), 191--199.
\end{thebibliography}
\end{document}